\newcommand\crule[3][black]{\textcolor{#1}{\rule{#2}{#3}}}
\definecolor{ColourEven}{HTML}{AAAAAA}
\definecolor{ColourOdd}{HTML}{AAAAFF}
\definecolor{ColourGap}{HTML}{FFFF00}
\definecolor{ColourGapRed}{HTML}{FF0000}
\definecolor{ColourGapBlack}{HTML}{000000}
\theoremstyle{plain}
\newtheorem{theorem}{Theorem}
\newtheorem{proposition}[theorem]{Proposition}
\newtheorem{property}[theorem]{Tiling property}
\title{Turtles, Hats and Spectres: Aperiodic structures on a Rhombic tiling}
\author{James Smith}
\affil{jpdsmith@gmail.com}
\affil{Nottingham, UK}
\date{}
\begin{document}

\maketitle

\begin{abstract}
The remarkable discovery \cite{smith2023aperiodic} of the Hat tile family answered the long-standing question of whether a single shape can tile the plane in only a non-periodic manner.

These notes derive aperiodic monotiles from a set of rhombuses with matching rules. This dual construction is used to simplify the proof of aperiodicity by considering the tiling as a colouring game on a Rhombille tiling. A simple recursive substitution system is then introduced to show the existence of a non-periodic tiling without the need for computer-aided verification.

A new cut-and-project style construction linking the Turtle tiling with 1-dimensional Fibonacci words provides a second proof of non-periodicity, and an alternative demonstration that the Turtle can tile the plane. An interactive 3D model of this is made available at \url{https://jpdsmith.github.io/AperiodicCube/}.

Deforming the Turtle into the Hat tile then provides a third proof for non-periodicity by considering the effect on the lattice underlying the  Rhombille tiling.

Finally, attention turns to the Spectre tile. In collaboration with Erhard Künzel and Yoshiaki Araki, we present two new substitution rules for generating Spectre tilings. This pair of conjugate rules show that the aperiodic monotile tilings can be considered as a 2-dimensional analog to Sturmian words.
\end{abstract}

\section{Initial observation}

The Spectre aperiodic monotile \cite{smith2023chiral} is a 13 sided shape, but can also be considered as having 14 sides of equal length, two of which are collinear.

\begin{figure}[ht]
    \centering
    \resizebox{0.08\columnwidth}{!}{\import{svg-inkscape}{spectre_svg-tex.pdf_tex}}
\end{figure}

Notice that every second angle is $90^{\circ}$. Except, that is, for angles near the collinear edges. However, taking a $90^{\circ}$ turn inwards where the collinear edges meet, we can interpret the shape as being \emph{16 sided} with two internal sides meeting in a pair of right angles.

That is, we can interpret the Spectre tile as consisting of 8 identical pairs of lines, each of unit length meeting at a right angle.

\begin{figure}[ht]
    \centering
    \resizebox{0.08\columnwidth}{!}{\import{svg-inkscape}{spectre90degrees_svg-tex.pdf_tex}}
\end{figure}

We can replace each pair of unit-length lines with a line of length $\sqrt{2}$ to obtain an eight sided shape as follows:

\begin{figure}[h]
    \centering
    \begin{tabular}{cc}
        \resizebox{0.08\columnwidth}{!}{\import{svg-inkscape}{spectreoverlay_svg-tex.pdf_tex}}
        \hspace{2em}
        &
        \resizebox{0.08\columnwidth}{!}{\import{svg-inkscape}{hexandsquare_svg-tex.pdf_tex}}
        \\
    \end{tabular}
\end{figure}

This process shrinks the Spectre's area and opens up a square shaped hole in any Spectre tiling. That is, Spectre tilings are dual to tilings of the plane with two shapes:

\begin{itemize}
    \item a hexagon with an attached rhombus with $30^{\circ}$ and $150^{\circ}$ angles -- the dual Spectre.
    \item a square `hole'.
\end{itemize}
\section{Matching rules}

This dual construction works for all members of the Hat/Turtle family of \cite{smith2023aperiodic}. We will start with simple shapes and rules that determine how to match these shapes in a tiling. These matching rules will be geometrically enforceable by an edge substitution that creates aperiodic monotiles.

Whereas \cite{smith2023aperiodic} parameterises this family as $\text{Tile(}a,b\text{)}$ in terms of edge lengths $a$ and $b$, we will use an angle $\alpha$ as a parameter.

For an angle $\alpha\in(0, 120]$ consider the following three shapes (all sides have length 1):
\begin{itemize}
    \item A rhombus, $\mathcal{R}_\alpha$, with internal angles $\alpha$, and $180-\alpha$.
        \begin{itemize}
            \item This will be our `hole' tile.
        \end{itemize}
    \item A rhombus $\mathcal{R}_{60}$.
        \begin{itemize}
            \item We will build a hexagon from three of these.
        \end{itemize}
    \item A rhombus $\mathcal{R}_{120-\alpha}$ with internal angles $120-\alpha$ and $\alpha+60$.
        \begin{itemize}
            \item  We will join this to the hexagon.
        \end{itemize}
    
\end{itemize}

\begin{figure}[ht]
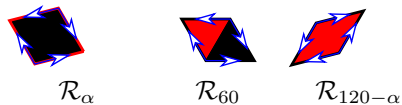

    \centering
    \begin{tabular}{ccc}
        \resizebox{0.06\columnwidth}{!}{\import{svg-inkscape}{black_svg-tex.pdf_tex}}
        \hspace{2em}
        &
        \resizebox{0.06\columnwidth}{!}{\import{svg-inkscape}{redblack_svg-tex.pdf_tex}}
        &
        \resizebox{0.06\columnwidth}{!}{\import{svg-inkscape}{red_svg-tex.pdf_tex}}
        \hspace{2em}
        \\
        $\mathcal{R}_\alpha$ &
        $\mathcal{R}_{60}$ &
        $\mathcal{R}_{120-\alpha}$ \\
    \end{tabular}
    \caption{A Rhomb tile set}
    \label{fig:rhombtileset}
\end{figure}

We define matching rules for these rhombuses by coloring the tiles \emph{black}, half \emph{black} \& half \emph{red} and \emph{red}. Also, edges are oriented with arrows pointing towards the defining angles $\alpha$, $60$ and $120-\alpha$.

Tiles are only allowed to match along edges where the arrows point the same direction and where the colours are opposite -- red tiles can only touch black tiles.

In particular, three copies of $\mathcal{R}_{60}$ join into a hexagon, so creating the set of three tiles in Figure~\ref{fig:hexrhombs}.

\begin{figure}[h]
    \centering
    \resizebox{0.15\columnwidth}{!}{\import{svg-inkscape}{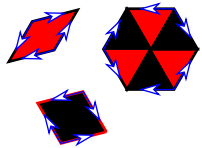}}
    \caption{A Hex and Rhomb tile set}
    \label{fig:hexrhombs}
\end{figure}

And joining the red rhombus $\mathcal{R}_{120-\alpha}$ to this hexagon forms the set of two tiles shown in Figure~\ref{fig:aperiodicpair}.

\begin{figure}[h]
    \centering
    \resizebox{0.12\columnwidth}{!}{\import{svg-inkscape}{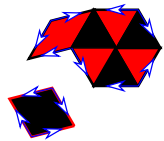}}
    \caption{An aperiodic tile set}
    \label{fig:aperiodicpair}
\end{figure}

Alternatively, these matching rules can be enforced by replacing each edge with a pair of line segments corresponding to two lines from vertices of the black rhombus to its midpoint. Replace edges with this pair of lines so that the lines cut inwards on black segments and outwards on red segments. Using this path to enforce the matching rules results in the black `hole' tile collapsing to a cross with zero area plus a single tile: the aperiodic monotile. For the Spectre tile, the arrows of the matching rules must be retained since the pair of line segments is symmetric in this case.

\begin{figure}[htb]
    \centering
    \resizebox{0.6\columnwidth}{!}{\import{svg-inkscape}{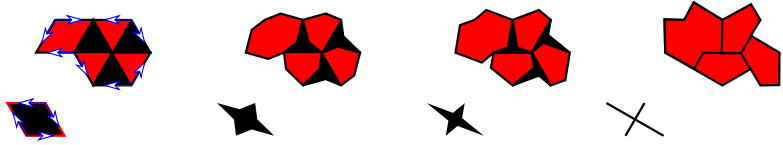}}
    \caption{Enforcing matching rules to remove the `hole' tile}
    \label{fig:enforced}
\end{figure}

The tile sets, shown in Figure~\ref{fig:enforced}, are mutually locally derivable from one another. Since the monotile is aperiodic \cite{smith2023aperiodic}, the pair of tiles is also aperiodic. However, we will use this dual construction to provide several simpler proofs of aperiodicity.

\begin{figure}[htb]
    \centering
    \resizebox{0.35\columnwidth}{!}{\import{svg-inkscape}{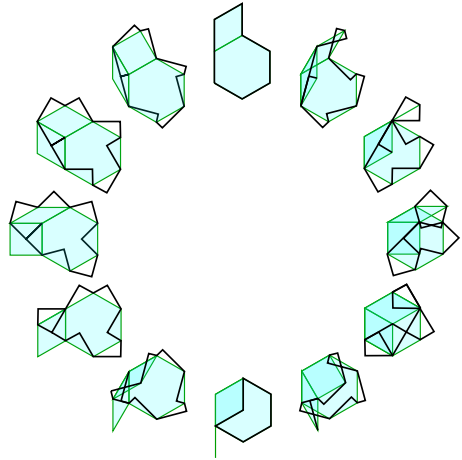}}
    \caption{The aperiodic monotile's family}
    \label{fig:familycircle}
\end{figure}

Note that this construction as stated is only well defined for $\alpha\in (0, 120)$. This is fine for our purposes, although with care, it can be extended as shown in Figure~\ref{fig:familycircle}. At $120^{\circ}$, corresponding to the Hat tile, the construction degenerates so that one of the rhombuses is a straight line. Beyond $180^{\circ}$, the resulting tile degenerates further to become disconnected and in most cases can't tile the plane.

The anti-Spectre at $\alpha=270^\circ$ and anti-Turtle at $\alpha=300^\circ$ are interesting shapes that can be shown to tile the plane periodically.

\section{The Turtle}

Unless we're careful, we may find the words `hexagon' and `rhombus' become overloaded as they can refer to tiles, parts of tiles, groups of tiles or general hexagonal arrangements. To help avoid confusion, from now on we will exclusively use the words \emph{rhomb} and \emph{hex} as follows:

\begin{itemize}
    \item \emph{Rhomb} will refer to the three component tiles of the matching rules. Specifically, we will call these three shapes \emph{red}, \emph{black} and \emph{red\&black} rhombs.
    \item \emph{Hex} or \emph{red\&black hex} will refer to three red\&black rhombs joined as a single hexagonal tile following the matching rules.
\end{itemize}

Similarly, we will refer to three classes of tiling all following the matching rules:

\begin{itemize}
    \item \emph{Dual tiling} referring to the tile set of Figure~\ref{fig:aperiodicpair}. In this section, we are considering specifically the dual Turtle tile set.
    \item \emph{Hex and rhomb tiling} referring to the tile set of Figure~\ref{fig:hexrhombs}.
    \item \emph{Red and black rhomb} tiling referring to the tile set of Figure~\ref{fig:rhombtileset}.
\end{itemize}

Although our focus is on understanding the \emph{dual} tilings, we shall find that occasionally considering a dual tiling as either a rhomb or a hex and rhomb tiling is a particularly powerful technique in understanding the tiling properties. 

In this section we consider our construction in the case of the Turtle tile. The Turtle tile refers to $\text{Tile(}\sqrt{3},1\text{)}$ from \cite{smith2023aperiodic}. The dual Turtle has $\alpha=60^\circ$ and is the most symmetric dual tile. We focus on this as a convenient representative for the aperiodic monotile family.

\begin{figure}[htb]
    \centering
    \resizebox{0.15\columnwidth}{!}{\import{svg-inkscape}{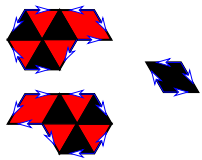}}
    \caption{The dual Turtle, with mirror image and black rhombus `hole' tile}
\end{figure}
\FloatBarrier

\clearpage
\subsection{Tiling properties}

Breaking out the component red and black rhomb tiles, we consider the matching rules on these

\begin{figure}[h]
    \centering
    \begin{tabular}{ccc}
        \includesvg[width=0.05\columnwidth]{images/matching_rules/black60}
        \hspace{1em}
        &
        \includesvg[width=0.05\columnwidth]{images/matching_rules/redblack}
        \hspace{1em}
        &
        \includesvg[width=0.05\columnwidth]{images/matching_rules/red60}
    \end{tabular}
\end{figure}

and find that, for the dual Turtle, the arrows of the matching rules orient the edges so that the rhombs form a Rhombille tiling as in Figure~\ref{fig:rhombic_tiling}.

Thanks to this, we can dispense with the edge arrows and consider the matching rules as a colouring game played on a Rhombille tiling.

\begin{figure}[hbt]
    \centering
     \includesvg[width=0.3\columnwidth]{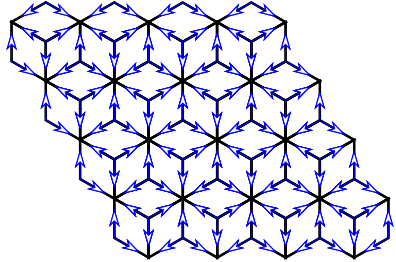}
    \caption{Rhombille tiling}
    \label{fig:rhombic_tiling}
\end{figure}

Next we make several simple observations, labelled as tiling properties. These will build up to the point where we can provide a first simple proof of the aperiodicity of the Turtle tile.

\bigskip
\fbox{\begin{minipage}{0.9\textwidth}
\begin{property}\label{property:proportion}
The proportion of black : red : red\&black rhombs in any dual Turtle tiling must be 1:1:3.  
\end{property}
\begin{proof}
    The dual Turtle tile is constructed from 1 red rhomb joined to 3 red\&black rhombs.
    The outline of this dual Turtle tile has 6 red edges and 2 black. To satisfy the matching rules, the red and black edges must be balanced. So one black rhomb is needed for each dual Turtle tile.
\end{proof}
\end{minipage}}
\bigskip

We avoid providing a precise definition of \emph{proportion}. Later on, we need property \ref{property:proportion} in proving aperiodicity. For that, it is sufficient to say that if a periodic tiling existed, then these proportions would be exact in any fundamental domain of the tiling.

The main benefit of considering our dual construction is that we can break down the tiling into component rhombs. Compared to the original monotile, these discrete components are easy to reason about. The following basic tiling property will be useful for reference in later proofs:

\bigskip
\fbox{\begin{minipage}{0.9\textwidth}
\begin{property}\label{prop:redblackhex}
Each regular hexagon in the Rhombille tiling has at least one rhomb coloured black\&red by the matching rules.
\end{property}
\begin{proof}
    See Figure~\ref{fig:redblackhex} where the grey rhombus is forced to be colored red\&black.
\end{proof}
\end{minipage}}
\bigskip

\begin{figure}[ht]
    \centering
     \includesvg[width=0.2\columnwidth]{images/properties/redblackhex}
    \caption{Tiling property \ref{prop:redblackhex}}
    \label{fig:redblackhex}
\end{figure}

We pause now to provide some definitions of Ammann Bars which are an essential part of understanding the aperiodic nature of the Turtle tile. When considering the underlying Rhombille tiling, these Ammann bars are simply lines of rhombuses in each of three directions though the honeycomb structure.
 
Placing a single dual Turtle tile, that tile's red rhomb defines a line of pairs running alongside the red\&black hex. This is shown in blue in Figure~\ref{fig:singletile}.

Following \cite{akiyama2023alternative}, we refer to this line as the \emph{Golden Ammann Bar} -- GAB -- defined by the dual Turtle tile.

We shall say that the GAB \emph{passes though} another dual Turtle tile whenever one of the rhombuses in the GAB is coloured red\&black and so forms part of the red\&black hex of the other dual Turtle tile.

Similarly, through each red\&black hex, the Rhombille tiling defines lines of rhombuses in three directions offset from each other by $120^\circ$ as shown in Figure~\ref{fig:singletile}. Following \cite{akiyama2023alternative}, we refer to these as \emph{complementary Golden Ammann Bars}.

\begin{figure}[htb]
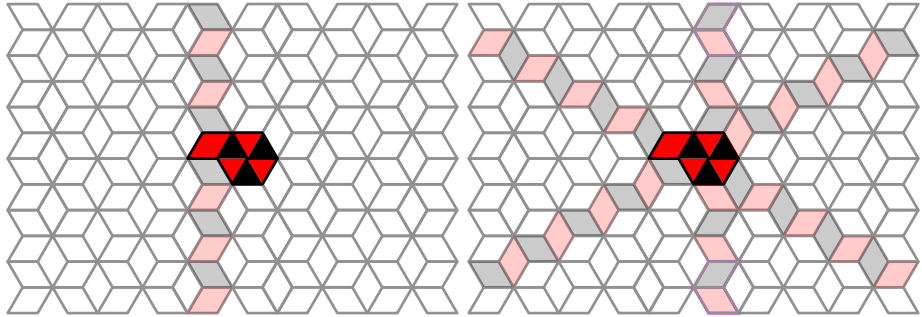

    \centering
     \includesvg[width=0.35\columnwidth]{images/properties/bars}
     \includesvg[width=0.35\columnwidth]{images/properties/complementary_bars}
    \caption{Golden Ammann Bar and complementary Golden Ammann Bars.}
    \label{fig:singletile}
\end{figure}

The effect of the matching rules along these Ammann bars forces long range order and aperiodicity for the Turtle tile. The reader is encouraged to treat the next tiling property as an exercise to help understand this.

\bigskip
\fbox{\begin{minipage}{0.9\textwidth}
\begin{property}\label{property:alongabar}
    Suppose the Golden Ammann Bar running alongside one dual Turtle tile passes through another dual Turtle. Then the two tiles must have opposite orientations.

    Similarly, two dual Turtle tiles passing through the same complementary Golden Amman Bar have the same orientation as each other.
\end{property}
\begin{proof}
    This follows immediately from the matching rules and is shown in Figure~\ref{fig:alongabar}:

    If we are to insert a red\&black rhomb into the alternating red, black, red pattern of rhombuses along a GAB, then we are forced to match colours in a way that preserves or reverses orientation as described.
\end{proof}
\end{minipage}}
\bigskip

\begin{figure}[htb]
    \centering
     \includesvg[width=0.25\columnwidth]{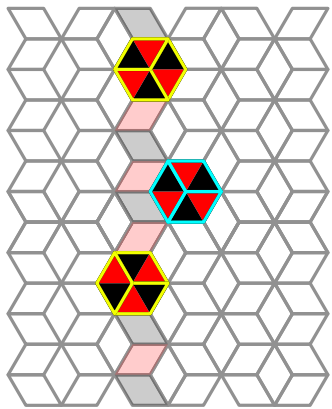}
    \caption{Mirrored tiles passing through a GAB}
    \label{fig:alongabar}
\end{figure}

It is interesting to see that local matching rules immediately force these long range effects.

More locally, we now see how dual tiles group together.

\bigskip
\fbox{\begin{minipage}{0.9\textwidth}
\begin{property}
    Any two red\&black hexes that share an edge must also share an edge with a third red\&black hex.
\end{property}
\begin{proof}
    This follows immediately from the matching rules as shown in Figure~\ref{fig:twoinarow}.
\end{proof}
\end{minipage}}
\bigskip

\begin{figure}[htb]
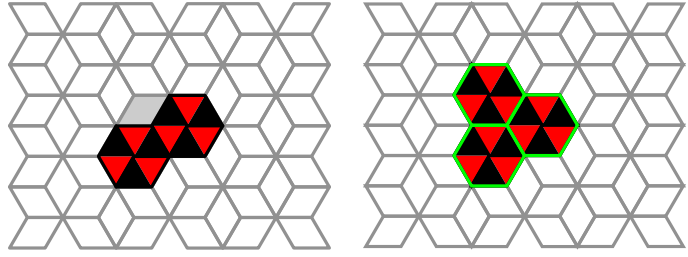

\centering
    \begin{tabular}{cc}
         \includesvg[width=0.25\columnwidth]{images/properties/two_in_a_row}
         & 
         \includesvg[width=0.25\columnwidth]{images/properties/two_in_a_row_completed}
    \end{tabular}
    \caption{Two consecutive red\&black hexes forced to include a third}
    \label{fig:twoinarow}
\end{figure}

In this way, the hex tiles form triangular arrangements. Since the dual Turtle tile has a red rhomb attached to the red\&black hex, this forced triangular arrangement can't be too large. Figure~\ref{fig:trianglearrangements} shows the allowed arrangements of 1,3 or 6 hexes. An arrangement of 10 hexagons would include a ``land-locked'' central hexagon that can't belong to a dual Turtle tile.

\begin{figure}[htb]
    \centering
    \resizebox{0.3\columnwidth}{!}{\import{svg-inkscape}{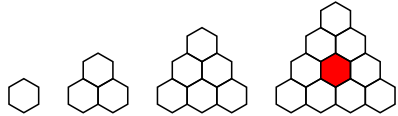}}
    \caption{A single tile, clusters or size 3 and 6, and a cluster that's too large.}
    \label{fig:trianglearrangements}
\end{figure}

\bigskip

For the following properties, we define \emph{cluster} to mean one to these triangular arrangements of 3 or 6 dual Turtles whose hexes meet along an edge.

\bigskip

Is it possible to tile the plane with dual Turtles tiles in a way that avoids clusters? No!:

If a hex and rhomb tiling is to have no clusters, then the six edges of each hex must be shared with red and black rhombs only. Figure~\ref{fig:notouchinghexagons} shows tilings with this property that otherwise maximise the density of hexes. The highlighted regions are fundamental domains of these periodic tilings and consists of 3 red rhombs and 3 black rhombs for every 2 hex tiles.

That is, there are insufficient hexes to be in 1:1 correspondence with the red rhombs: a denser arrangement of hexes is required to form a dual Turtle tiling.

\begin{figure}[htb]
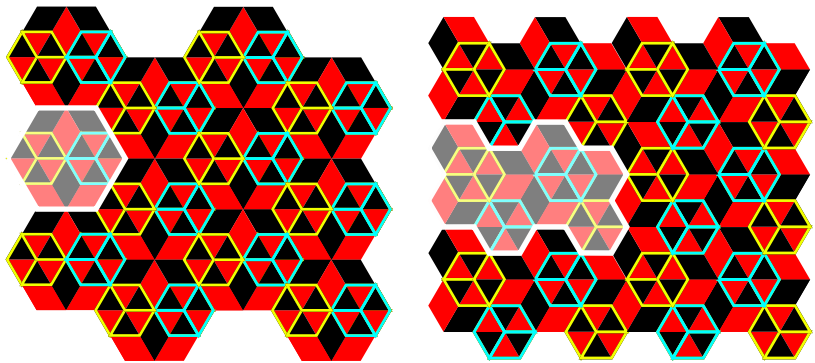

    \centering
    \begin{tabular}{c c}
        \resizebox{0.3\columnwidth}{!}{\import{svg-inkscape}{no_touching_hexagons_svg-tex.pdf_tex}}
        &
        \resizebox{0.3\columnwidth}{!}{\import{svg-inkscape}{no_touching_hexagons2_svg-tex.pdf_tex}}
    \end{tabular}
    \caption{Periodic tilings with no hexagons sharing an edge.}
    \label{fig:notouchinghexagons}
\end{figure}

\bigskip
\fbox{\begin{minipage}{0.9\textwidth}
\begin{property}\label{property:clusterorientation}
    Across a dual Turtle tiling, all clustered tiles share the same orientation.
\end{property}
\begin{proof}
    The matching rules clearly force hex tiles within a single cluster to have the same orientation.
    
    Now suppose $A$ and $B$ are dual Turtle tiles belonging to different clusters.

    There are three complementary GABs passing through each of $A$ and $B$ in three directions, so we can find two intersecting complementary GAB as shown in Figure~\ref{fig:clustersmatch}. In fact, there are at least two parallel complementary GABs from each cluster.
    
    The intersection of these two pairs of complementary GABs spans a region that includes more than one hexagon from the underlying Rhombille tiling. One of these is highlighted in Figure~\ref{fig:clustersmatch}.

    By tiling property \ref{prop:redblackhex}, there is a red\&black hex overlapping this spanned hexagon. And by property \ref{property:alongabar}, this red\&black hex at the intersection must have the same orientation as the hexes in each cluster. In particular, $A$ and $B$ share the same orientation.
\end{proof}
\end{minipage}}
\bigskip

\begin{figure}[htb]
    \centering
    \resizebox{0.35\columnwidth}{!}{\import{svg-inkscape}{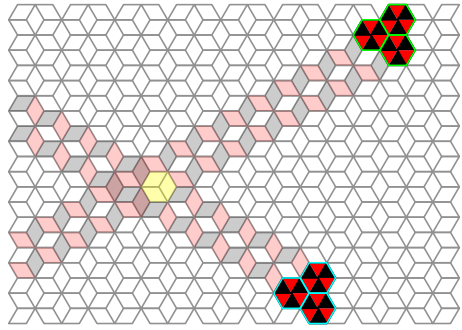}}
    \caption{Complementary GABs passing through tile clusters}
    \label{fig:clustersmatch}
\end{figure}

By now, the matching rules have strongly restricted the behaviour of any possible tiling and we can summarise the properties with the following characterisation:
\begin{itemize}
    \item All clustered tiles have the same orientation. So, tiles with the opposite orientation cannot cluster.
    \item So there is a difference between orientations and it makes sense to speak of regular tiles vs. flipped tiles.
    \item From this point, we refer to a GAB as any line of rhombuses that only passes through [the red\&black hex of] \emph{flipped} tiles.
    \item Lines of rhombuses passing through regular (non-flipped) tiles will be called \emph{complementary} GABs.
\end{itemize}

By adding one final property, possible tilings will be restricted enough that we can prove that periodic tilings are impossible.

\bigskip
\fbox{\begin{minipage}{0.9\textwidth}
\begin{property}\label{prop:intersectinggabs}
    Each flipped tile lies at the intersection of three GABs, and two non-parallel GABs intersect at a flipped tile.
\end{property}
\begin{proof}
    The first part is just a clarification: having chosen to distinguish between regular and flipped tiles, the three lines of rhombuses passing through a flipped tile can only pass through other flipped tiles (property \ref{property:alongabar}). And we refer to these lines as GABs.
    
    That two GABs intersect at a flipped tile follows from the matching rules: the intersection of two GABs consists of a single rhombus as shown in Figure~\ref{fig:intersectinggabs}. This rhombus must be coloured red\&black since otherwise the matching rules along one GAB force it to be coloured red and the matching rules along the other GAB force it to be coloured black.
    
    This red\&black rhombus determines the red\&black hex that both GABs passes though. Since GABs only pass through flipped tiles, this red\&black hex belongs to a flipped tile lying at the intersection.
\end{proof}
\end{minipage}}
\bigskip

\begin{figure}[htb]
    \centering
    \resizebox{0.3\columnwidth}{!}{\import{svg-inkscape}{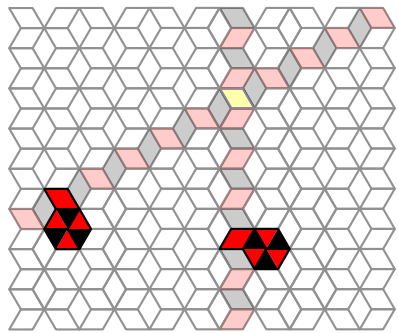}}
    \caption{GABs intersecting. A flipped tile must lie at the intersection.}
    \label{fig:intersectinggabs}
\end{figure}
\FloatBarrier
\subsection{First proof of non-periodicity: via Ammann Bars}

The alternative proof of aperiodicity in \cite{akiyama2023alternative} makes use of the fact that the Golden Ammann Bars (in the non-dual tiling) lie on the Kagome lattice. Since the Kagome lattice is dual to the Rhombille tiling, the whole proof of \cite{akiyama2023alternative} carries over to our dual construction.

However, the dual construction makes it easier to reason about the aperiodic monotile and we find that an even simpler proof can be provided. The following proof is based on that found in \cite{akiyama2023alternative}, but dispenses of statistical counts of the length of Golden Amman Bar lines and instead counts exact numbers of rhombuses in a periodically repeating patch of the Rhombille tiling.

\begin{proposition}\label{proposotion:non-periodic}
    The Turtle tile is not periodic
\end{proposition}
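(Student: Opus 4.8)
The plan is to argue by contradiction: suppose the Turtle tile admits a periodic tiling. Passing to the dual picture, this yields a periodic dual Turtle tiling, hence a periodic colouring of the Rhombille tiling invariant under some lattice $\Lambda$ of translations. I would fix a fundamental domain $D$ (a finite patch whose $\Lambda$-translates tile the plane) and work entirely with exact rhomb counts inside $D$, as advertised. If $D$ contains $N$ dual Turtle tiles, then Property \ref{property:proportion} pins the rhomb counts down exactly: $N$ black, $N$ red and $3N$ red\&black rhombs, i.e. $5N$ rhombs in all. Using the regular/flipped dichotomy set up after Property \ref{property:clusterorientation}, I would also split the $N$ tiles into $F$ flipped and $R = N - F$ regular tiles.

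Next I would bring in the Ammann-bar structure. By Property \ref{prop:intersectinggabs} every flipped tile sits at the triple intersection of three GABs, one in each Rhombille direction, and conversely every intersection of two non-parallel GABs is a flipped tile; the regular tiles lie only on complementary GABs. Because the colouring is $\Lambda$-periodic, each GAB descends to a closed straight loop of rational slope on the torus $\mathbb{R}^2/\Lambda$, and in each of the three directions the lines of rhombs split into finitely many GABs and complementary GABs. The strategy is to count the flipped tiles $F$ as the intersection points of these loop families, and to count the rhombs swept out along the bars, thereby converting the geometric Ammann-bar data into exact integers attached to $D$.

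The crux, and the step I expect to be the main obstacle, is to show that these integer counts cannot be reconciled. Here the word \emph{Golden} does the work: the matching rules (via Property \ref{property:alongabar} acting on the alternating red/black/red\&black pattern) force the spacing of parallel GABs to take exactly two values, and the local rule selecting the next spacing propagates as a Fibonacci-type substitution along each direction, so that the ratio of the two spacings --- equivalently the ratio of complementary GABs to GABs in a fixed direction --- is forced to be the golden ratio $\phi = (1+\sqrt{5})/2$. I would establish this not by averaging bar lengths as in \cite{akiyama2023alternative}, but by exhibiting the substitution explicitly on a strip of $D$ and reading off the resulting count. Since $\phi$ is irrational while the numbers of GABs, complementary GABs and flipped tiles inside the finite domain $D$ are necessarily integers, the forced golden relation cannot hold with integer data. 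This contradiction shows that no fundamental domain $D$ can exist, so the Turtle tile is not periodic.
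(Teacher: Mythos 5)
Your setup (pass to the dual tiling, take a fundamental domain, work with exact integer counts, bring in Properties \ref{property:proportion} and \ref{prop:intersectinggabs}) matches the paper's framework, but the step you yourself flag as the crux is a genuine gap, and it is not the route the paper takes. You assert that the matching rules force the spacings of parallel GABs to take exactly two values arranged by a Fibonacci-type substitution, so that a golden-ratio frequency is ``forced'' and contradicts integrality. Nothing in Properties \ref{property:proportion}--\ref{prop:intersectinggabs} yields such a forced substitution structure; proving that local matching rules propagate a hierarchical Fibonacci pattern is essentially the hard, computer-assisted part of the original aperiodicity proofs, and deferring it to ``exhibiting the substitution explicitly on a strip of $D$'' leaves the entire difficulty unresolved. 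There is also a logical trap hiding there: in a hypothetical periodic tiling you may not import structural facts known about the genuine Turtle tiling; every constraint must be re-derived from the matching rules alone, which is exactly what makes the forced-substitution claim expensive. (As a minor point, the relevant frequency ratio would in any case be $\phi^2=(3+\sqrt{5})/2$ rather than $\phi$, though irrationality is all one needs.)

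The paper's proof needs none of this machinery. After reducing to translations $nv_1, nv_2$ preserving the Rhombille tiling, it lets $k$ be the number of GABs per direction in the $n\times n$ fundamental domain and double-counts the \emph{black} rhombs: by Property \ref{property:proportion} there are $\tfrac{3}{5}n^2$ of them, while by Properties \ref{property:alongabar} and \ref{prop:intersectinggabs} every black rhomb lies on exactly one of the $3k$ GABs and each GAB carries exactly $n-k$ black rhombs (its $2n$ rhombuses split into $2k$ covered by flipped hexes and $2(n-k)$ alternating red/black). Equating $3k(n-k)=\tfrac{3}{5}n^2$ gives $q(1-q)=\tfrac{1}{5}$ for the rational number $q=k/n$, whose solutions $\tfrac{5\pm\sqrt{5}}{10}$ are irrational --- an immediate contradiction. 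So the irrationality is extracted from a single quadratic identity between two exact counts, with no appeal to spacing patterns or substitution rules. To repair your proposal, replace your crux with this double count: you already have all the ingredients ($F$ as the number of GAB intersection points, the GAB loops on the torus), and the black-rhomb count is the missing bridge between them.
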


\begin{proof}

Suppose there were a periodic tiling, let $t_1$, $t_2$ denote translations preserving it.

These translations must also preserve the underlying Rhombille tiling. Choose a basis of translation vectors $v_1$, $v_2$ preserving the Rhombille tiling (see Figure~\ref{fig:basisvectors}).

\begin{figure}[htb]
    \centering
    \resizebox{0.2\columnwidth}{!}{
\begingroup%
  \makeatletter%
  \providecommand\color[2][]{%
    \errmessage{(Inkscape) Color is used for the text in Inkscape, but the package 'color.sty' is not loaded}%
    \renewcommand\color[2][]{}%
  }%
  \providecommand\transparent[1]{%
    \errmessage{(Inkscape) Transparency is used (non-zero) for the text in Inkscape, but the package 'transparent.sty' is not loaded}%
    \renewcommand\transparent[1]{}%
  }%
  \providecommand\rotatebox[2]{#2}%
  \newcommand*\fsize{\dimexpr\f@size pt\relax}%
  \newcommand*\lineheight[1]{\fontsize{\fsize}{#1\fsize}\selectfont}%
  \ifx\svgwidth\undefined%
    \setlength{\unitlength}{66.24590497bp}%
    \ifx\svgscale\undefined%
      \relax%
    \else%
      \setlength{\unitlength}{\unitlength * \real{\svgscale}}%
    \fi%
  \else%
    \setlength{\unitlength}{\svgwidth}%
  \fi%
  \global\let\svgwidth\undefined%
  \global\let\svgscale\undefined%
  \makeatother%
  \begin{picture}(1,0.8014125)%
    \lineheight{1}%
    \setlength\tabcolsep{0pt}%
    \put(0,0){\includegraphics[width=\unitlength,page=1]{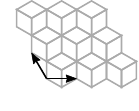}}%
    \put(0.48351549,0.04226677){\color[rgb]{0,0,0}\makebox(0,0)[lt]{\lineheight{1.25}\smash{\begin{tabular}[t]{l}v1\end{tabular}}}}%
    \put(-0.00629461,0.44448877){\color[rgb]{0,0,0}\makebox(0,0)[lt]{\lineheight{1.25}\smash{\begin{tabular}[t]{l}v2\end{tabular}}}}%
  \end{picture}%
\endgroup%
}
    \caption{Basis of translations preserving the Rhombille tiling}
    \label{fig:basisvectors}
\end{figure}

Writing $t_1$ and $t_2$ in terms of this basis gives

\[
\begin{pmatrix}
t_1\\
t_2
\end{pmatrix}
= 
\begin{pmatrix}
a & b \\
c & d
\end{pmatrix}
\begin{pmatrix}
    v_1\\
    v_2
\end{pmatrix}
\]
for some $a,b,c,d \in \mathbb{N}$ with $ad-bc \neq 0$.

Since
\begin{align*}
    dt_1-bt_2 &= (ab-bc)v_1 \\
    at_2-ct_1 &= (ad-bc)v_2 
\end{align*}

there is an $n = ad-bc \in\mathbb{N}$ such that translations by $nv_1$ and $nv_2$ preserve the dual Turtle tiling.

A fundamental domain for the Rhombille tiling modulo $nv_1$ and $nv_2$ is an $n\times n$ hexagonal grid of $3n^2$ rhombuses. We don't expect that this grid can be partitioned into dual Turtle tiles: some tiles may overlap the boundary. However, the grid \emph{must} decompose into $3n^2$ red, black and red\&black rhombs satifying the matching rules.

\bigskip

Let $k \in \mathbb{N}$ denote the number of GABs parallel to $v_1$ in the $n\times n$ grid of rhombic hexagons. We shall derive a contradiction by showing that the rational number $q:=k/n$ is in fact irrational.

\bigskip

First, the grid of $n\times n$ hexagons contains $3n^2$ rhombuses. The red, black and red\&black rhombs of the matching rules are in the proportion $1:1:3$ [tiling property \ref{property:proportion}], so one in five are black and in particular:

\begin{quote}
\centering
Modulo $nv_1$, $nv_2$, there are $\frac{3}{5}n^2$  black rhombs.
\end{quote}

We re-count the black rhombs in terms of the number of $k$:

 Recall that tiling property~\ref{prop:intersectinggabs} says that at the intersection of any two GABs lies a flipped tile. And each flipped tile lies at the intersection of three GABs in the directions $v_1, v_2$, and $\frac{1}{2}(v_1+v_2)$. Consider one GAB that is parallel to $\frac{1}{2}(v_1+v_2)$. This intersects all the GABs parallel to $v_1$ and all the GABs parallel to $v_2$ and the intersections points along this GAB determines a 1:1 correspondence between the GABs in the other two directions. In particular, there are the same number, $k$, of GABs in the two directions $v_1$ and $v_2$.

The same argument applies equally to each direction so we find that, modulo $nv_1$ and $nv_2$, there are $k$ GABs in each direction.

So there are $3k$ GABs meeting at a flipped tile at each of the $k^2$ intersection points. This is illustrated in Figure~\ref{fig:periodicgrid}.

Now, modulo $nv_1$ and $nv_2$, a GAB consists of $2n$ rhombuses and passes through $k$ flipped tiles. The red\&black hex of each flipped tile covers 2 rhombuses leaving the other $2(n-k)$ rhombuses in each GAB equally red or black. In particular there are $n-k$ black rhombs in each of the $k$ GABs in each of the 3 directions. This gives an exact count:

\begin{quote}
\centering
Modulo $nv_1$, $nv_2$, there are $3k(n-k)$  black rhombs
\end{quote}

\bigskip

So 
\[
 3k(n-k) = \frac{3}{5}n^2
\]
Writing $q = k/n$ gives
\[
 q(1-q) = \frac{1}{5}
\]
So $k/n = \frac{5 \pm \sqrt{5}}{10}$ is irrational. The initial assumption is false: there is no periodic Turtle tiling.

\begin{figure}[htb]
    \centering
    \resizebox{0.5\columnwidth}{!}{\import{svg-inkscape}{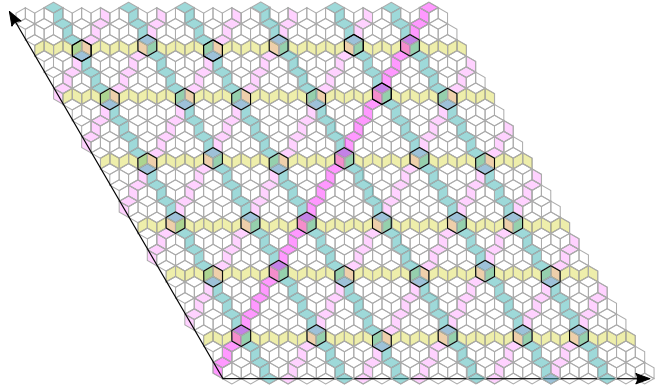}}
    \caption{$k^2$ Flipped tiles at the intersection of $3k$ GABs.}
    \label{fig:periodicgrid}
\end{figure}

\end{proof}

Interestingly, this proof should make it impossible to draw Figure~\ref{fig:periodicgrid}. However, the keen observer may spot an additional GAB intersection point creeping into the diagram.
\FloatBarrier
\subsection{Tilability}\label{section:turtlesubs}

To conclude that the Turtle tile is aperiodic, we need to prove that it can tile the plane. There are already two different substitutions discovered in \cite{smith2023aperiodic} and a third in \cite{akiyama2023alternative}. This section will introduce new rules that are a variant of the `Golden Hex' of \cite{akiyama2023alternative}. Although independently discovered here, the same rules were first uncovered by Erhard Künzel \cite{erhard2023bam} where they are described as Brick and Mortar [BAM] rules. That name highlights their unusual construction as recursively defined palindromic 1D strips of tiles (mortar) gluing together increasingly large 2D regions (bricks) of tiles.

These new rules closely relate to the H7/H8 rules of \cite{smith2023aperiodic} and we start by considering those.

\subsubsection{Hexagonal clusters and the H7/H8 substitution rules.}

The reader is encouraged to try the excellent interactive application of \cite{csk_hat_h7h8} to explore the H7/H8 substitution rules. The rules are natural and create expanding regions of tiles that converge to a fractal shape.

Recall that tiling property \ref{prop:intersectinggabs} showed that each flipped tile lies at the intersection of three GABs, and two non-parallel GABs intersect at a flipped tile. Another way of saying this is that a flipped tile can be viewed as a `seed' that defines three sets of Golden Ammann Bars passing through it. Non-flipped tiles then arrange along the Ammann bars in these directions. Near the flipped tile, the arrangement of tiles is forced: there is only one place each tile can fit. This is shown in Figure~\ref{fig:turtle_forced}.

\begin{figure}[htb]
    \centering
    \resizebox{0.7\columnwidth}{!}{\import{svg-inkscape}{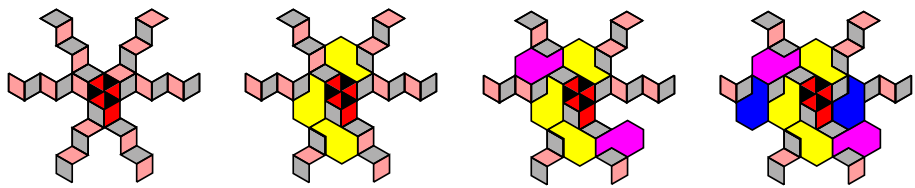}}
    \caption{Dual Turtle tiles forced to cluster along Ammann Bars around a flipped seed tile.}
    \label{fig:turtle_forced}
\end{figure}

The result is a cluster of 8 tiles. It is not immediately obvious that these clusters can tile the plane. Indeed they only tile if we are allowed to identify occasional Turtle tiles from adjacent clusters -- or, as in the H7/H8 rule presentation, consider a second smaller cluster of 7 tiles.

It is clear however that \emph{if} these clusters are to tile the plane, then they must do so in a hexagonal pattern: the GABs extend out of each cluster in six directions and GABs from adjacent clusters must coincide to force a honeycomb structure. And so it is possible to represent the cluster as a regular hexagon which, in Figure~\ref{fig:hex_turtle_cluster}, we orient with an arrow.

\begin{figure}[htb]
    \centering
    \resizebox{0.5\columnwidth}{!}{\import{svg-inkscape}{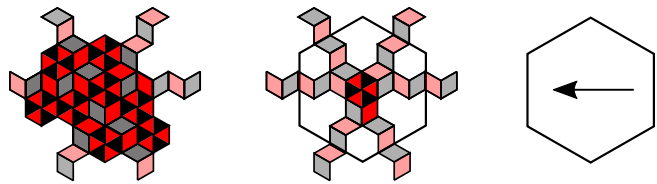}}
    \caption{Hexagonal Turtle cluster}
    \label{fig:hex_turtle_cluster}
\end{figure}

Iterating the H7/H8 substitution rules on this hexagonal representation and highlighting certain paths in yellow, as shown in Figure~\ref{fig:hex_turtle_h7h8}, a structure emerges. This structure is a new substitution system that we describe next.

\begin{figure}[htb]
\centering
\begin{tabular}[t]{cc}
     \resizebox{0.1\columnwidth}{!}{\import{svg-inkscape}{cluster1_svg-tex.pdf_tex}}
     &
     \resizebox{0.3\columnwidth}{!}{\import{svg-inkscape}{cluster2_svg-tex.pdf_tex}}
     \vspace{1em}
     \\
     \resizebox{0.4\columnwidth}{!}{\import{svg-inkscape}{cluster3_svg-tex.pdf_tex}}
     &
     \includegraphics[width=0.4\columnwidth]{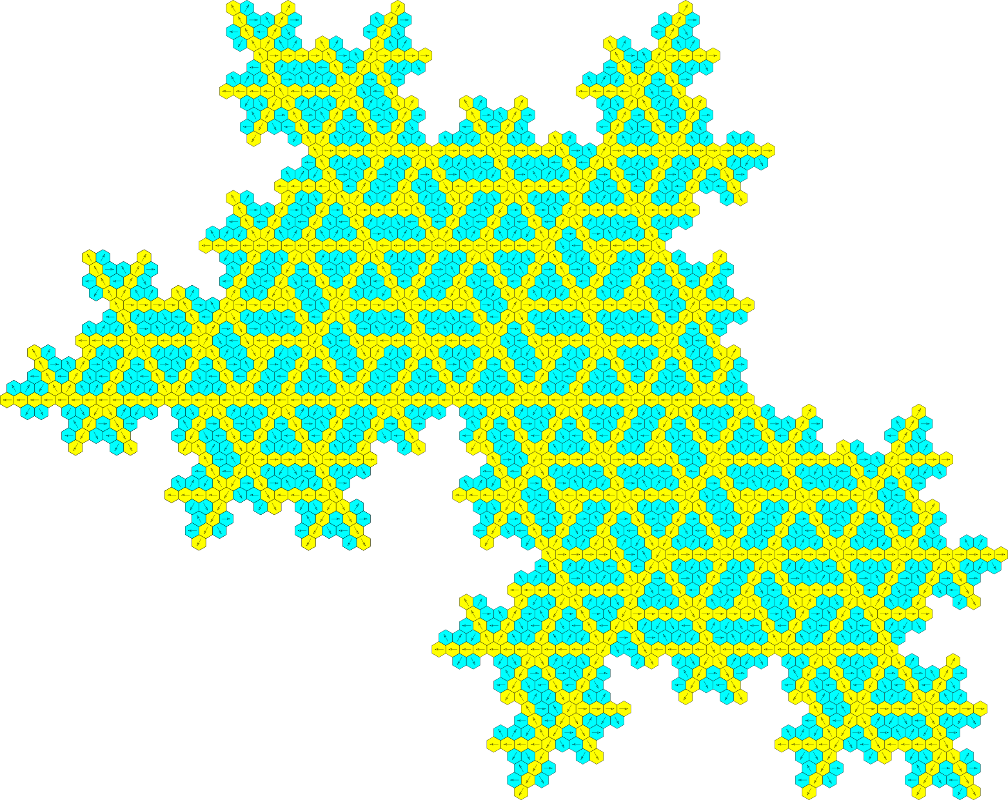}
     \\
\end{tabular}

\caption{H7/H8 cluster substitution overlaid with the new rules}
\label{fig:hex_turtle_h7h8}
\end{figure}

\FloatBarrier
\subsubsection{Bricks and Mortar substitution system}

This section introduces a recursive system that provides a simple proof that the Turtle (or Hat) can tile the plane. The rules are based on a recursively defined set of three strips of tiles that we label $J$, $A_n$ and $B_n$. These are the Golden Sturmian Patches of \cite{akiyama2023alternative}.

\begin{figure}[htb]
    \centering
    \resizebox{0.03\columnwidth}{!}{\import{svg-inkscape}{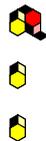}}
    \caption{Initial strips $J, A_0, B_0$}
    \label{fig:strips0}
\end{figure}

Let $XY$ denote the concatenation of two strips of tiles $X$ and $Y$. Then define

\begin{align*}
    A_{n+1} &:= B_{n}JA_{n} \\
    B_{n+1} &:= A_{n+1}B_{n} = B_{n}JA_{n}B_{n}
\end{align*}

with $J$, $A_0$ and $B_0$ as shown in Figure~\ref{fig:strips0}. The first few values of the sequence are shown in Figure~\ref{fig:strips}.

\begin{figure}[htb]
\setlength{\tabcolsep}{20pt}
\renewcommand{\arraystretch}{1.5}
\begin{tabular}{c|l}
    $J, A_1, B_1$ 
    &
    \resizebox{0.05\columnwidth}{!}{\import{svg-inkscape}{Strips1_svg-tex.pdf_tex}} \\
    \hline
    \vspace{1em}\\
    $J, A_2, B_2$
    &
    \resizebox{0.15\columnwidth}{!}{\import{svg-inkscape}{Strips2_svg-tex.pdf_tex}} \\
    \hline
    \vspace{1em}\\
    $J, A_3, B_3$
    &
    \resizebox{0.45\columnwidth}{!}{\import{svg-inkscape}{Strips3_svg-tex.pdf_tex}} \\
    \hline \\
    &
    \resizebox{0.45\columnwidth}{!}{\import{svg-inkscape}{Strips3_grey_svg-tex.pdf_tex}} \\
    \hline
    &
    $A_{n+1} := B_{n}JA_{n}$ \\
    &
    $B_{n+1} := A_{n+1}B_{n} = B_{n}JA_{n}B_{n}$
\end{tabular}
\caption{Recursively defined strips of tiles}
\label{fig:strips}
\end{figure}

Next, we define a series of triangular and parallelogram shaped metatiles $T_n$ and $P_n$. The initial values $T_0$ and $P_0$ are empty and $T_1$ and $P_1$ are shown in Figure~\ref{fig:step1}. To help visually separate each dual tile, we replace the red and black colouring in the hexes with various different colours.

\begin{figure}[htb]
    \centering
    \resizebox{0.2\columnwidth}{!}{\import{svg-inkscape}{step1_opt_svg-tex.pdf_tex}}
    \caption{$T_1$ and $P_1$}
    \label{fig:step1}
\end{figure}

Figure~\ref{fig:substitution_diagram} shows the recursive step that forms $T_{n+1}$ and $P_{n+1}$. Using lower case letters to denote the number of tiles in each shape, we have the following recursions:

\begin{alignat*}{2}
    \mathrm{t}_{n+1} &= 3\mathrm{t}_n + 3\mathrm{p}_n + \mathrm{t}_{n-1} && + 6\mathrm{b}_n \\
    \mathrm{p}_{n+1} &= 2\mathrm{t}_n + 5\mathrm{p}_n + 2\mathrm{t}_{n-1} && + 8\mathrm{b}_n \\
\end{alignat*}

\begin{figure}[htb]
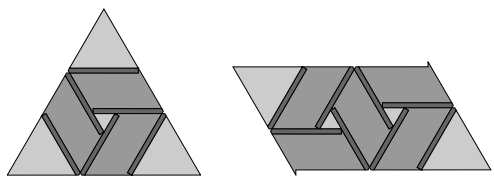

\centering
    \begin{tabular}{cc}
        \resizebox{0.15\columnwidth}{!}{\import{svg-inkscape}{TriangleDiagram_svg-tex.pdf_tex}} &
        \resizebox{0.2\columnwidth}{!}{\import{svg-inkscape}{ParallelogramDiagram_svg-tex.pdf_tex}} \\
    \end{tabular}
\caption{$T_{n+1}$ and $P_{n+1}$ each formed from copies of $T_{n}$, $T_{n-1}$, $P_{n}$ and the strip $B_{n}$}
\label{fig:substitution_diagram}
\end{figure}

The strip $B_n$ is defined so that it fits along the bottom edges of both $T_n$ and $P_n$. The strips $J$ and $A_n$ used in generating $B_n$ don't take any further part in the substitution. The symbol $J$ is chosen to indicate the role of this metatile as a junction between strips. The strip $A_n$ fits along the side edges of the parallelogram metatile.

\begin{figure}[htb]
    \centering
   \begin{tabular}{cc}
    \resizebox{0.15\columnwidth}{!}{\import{svg-inkscape}{T2_opt_svg-tex.pdf_tex}}
    &
    \resizebox{0.2\columnwidth}{!}{\import{svg-inkscape}{P2_opt_svg-tex.pdf_tex}}
    \end{tabular}
    \caption{Step 2}
\end{figure}

\begin{figure}[htb]
    \centering
   \begin{tabular}{c c}
    \resizebox{0.3\columnwidth}{!}{\import{svg-inkscape}{T3_outline_opt_svg-tex.pdf_tex}} &
    \resizebox{0.3\columnwidth}{!}{\import{svg-inkscape}{T3_opt_svg-tex.pdf_tex}} \\
    \vspace{3em}
    \\
    \resizebox{0.4\columnwidth}{!}{\import{svg-inkscape}{P3_outline_opt_svg-tex.pdf_tex}} &
    \resizebox{0.4\columnwidth}{!}{\import{svg-inkscape}{P3_opt_svg-tex.pdf_tex}}
    \end{tabular}
    \caption{Step 3}
\end{figure}

\begin{figure}[htb]
    \centering
    \resizebox{0.22\columnwidth}{!}{\import{svg-inkscape}{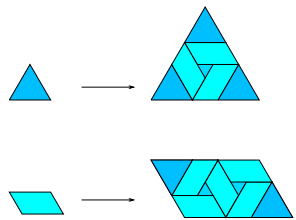}}
    \caption{Limit substitution with inflation factor $\phi^4$}
\end{figure}

\begin{figure}[htb]
    \centering
    \resizebox{0.35\columnwidth}{!}{\import{svg-inkscape}{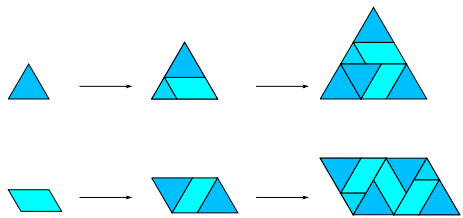}}
    \caption{A related `square root' substitution. Long : short side lengths are in the golden ratio $\phi$, so the inflation factor is $\phi^2$}
\end{figure}

\FloatBarrier
\subsubsection{Proof of the substitution rules}

Proving that these substitution rules are well defined and form a tiling of the plane relies on the following facts:
\begin{itemize}
    \item Except for the left-most prototile, the bottom  edges of $P_n$ and $T_n$ are identical.
    \begin{itemize}
        \item Proof: This is true by inspection for $n=1$. Assume it's also true for $n$. Since the constructions of the bottom edges of $P_{n+1}$ and $T_{n+1}$ (Figure~\ref{fig:substitution_diagram}) both add the same tiles as the bottom edge extends to the right, then it's true by induction for all $n$.
    \end{itemize}
    \item The top edge of the strip $B_n$ is identical its bottom edge rotated by $180^\circ$. Similarly, the top edge of the strip $JA_n$ is identical its bottom edge rotated by $180^\circ$.
    \begin{itemize}
        \item Proof: This is true by inspection for $n=1$. Represent the prototile $B_0 = A_0$ by the symbol $0$ and the flipped prototile as $1$ so that $B_1$ is $00100$ and $JA_1$ is $010010$. It is sufficient to show that the representations for $B_n$ and $JA_n$ are palindromes. Assume $B_{n-1}$ and $JA_{n-1}$ are palindromes, then 
        \[
            B_n := B_{n-1}(JA_{n-1})B_{n-1}
        \]
        is a palindrome. Similarly,
        \begin{align*}
            JA_{n} &:= J(B_{n-1}JA_{n-1}) \\
            &:= J(A_{n-1}B_{n-2})JA_{n-1}\\
            &:= (JA_{n-1})B_{n-2}(JA_{n-1})
        \end{align*}
        is a palindrome by induction.
    \end{itemize}
    
\end{itemize}

As noted in \cite{akiyama2023alternative}, although these strips of tiles, $B_n$, are palindromes, they differ at the left and rightmost edges. According to the matching rules, these are coloured black (left edge) and red (right edge).

The top right and bottom left corners of the parallelograms, $T_n$, have a prototile that only matches the black (left) edge of the strips. Thus, the directions of the strips in the recursive step of Figure~\ref{fig:substitution_diagram} are unambiguously defined.

Proving that the substitution rules work is then an exercise in induction by comparing the recursion $B_{n+1} = A_{n+1}B_n$ with the recursive definition of the bottom edge of $P_{n+1}$ and comparing $A_{n+1} = B_nJA_n$ with the side of $P_{n+1}$.

\begin{figure}[hbt]
    \centering
    \includegraphics[width=\linewidth]{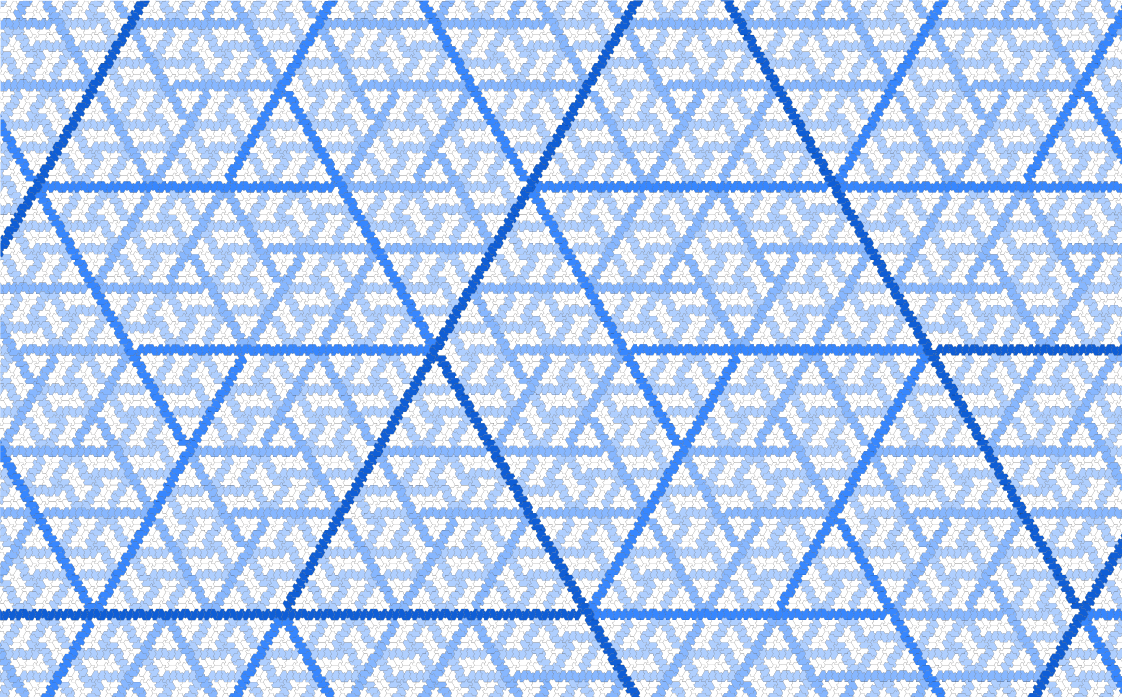}
    \caption{The Turtle tiling}
    \label{fig:turtlepatch}
\end{figure}
\FloatBarrier
\subsection{Fibonacci cube construction}\label{section:fibinacci}

The Rhombille tiling can be viewed as a slice through a 3D cubic lattice -- each hexagon clearly resembles the projection of a cube. In this section we demonstrate a way of viewing the Turtle tiling on this slice. This is a cut-and-project style construction, except that we will not cut a periodic structure at an irrational angle. Instead, we will cut an aperiodic structure at a rational angle.

The strips of tiles $A_n$ and $B_n$ defined in the previous section form sequences of tiles and their mirror images, coloured yellow and red respectively in Figure~\ref{fig:strips}. These are the Golden Sturmian Patches of \cite{akiyama2023alternative} and, as noted on page 10 of \cite{akiyama2023alternative}, we find that these are related to a Sturmian sequence:

Representing regular and flipped tiles with the symbols 0 and 1, we have the following values:
\begin{align*}
        A_0 &= 0 \\
        B_0 &= 0 \\
        A_1 &= 0010 \\
        B_1 &= 00100 \\
        A_2 &= 00100010010 \\
        B_2 &= 0010001001000100 \\
        ... \\
        A_n &:= B_{n-1}01A_{n-1} \\
        B_n &:= A_nB_{n-1}
\end{align*}

Comparing this to the following sequence of Fibonacci words

\begin{align*}
        \mathcal{F}_1 &= 0 \\
        \mathcal{F}_2 &= 001 \\
        \mathcal{F}_3 &= 0010 \\
        \mathcal{F}_4 &= 0010001 \\
        \mathcal{F}_5 &= 00100010010 \\
        \mathcal{F}_6 &= 001000100100010001 \\
        ... \\
        \mathcal{F}_n &:= \mathcal{F}_{n-1}\mathcal{F}_{n-2}
\end{align*}

We see that
\begin{align*}
    \mathcal{F}_{2k-1} &= A_k \\
    \mathcal{F}_{2k} &= B_k01
\end{align*}
Letting $F_n$ denote the standard numerical Fibonacci sequence $0,1,1,2,3,5,...$ with initial values $F_0 := 0$ and $F_1 := 1$ and $F_n := F_{n-1} + F_{n-2}$, we see that the total counts of symbols 0 and 1 is given by

\begin{align*}
\left| \mathcal{F}_n \right|_0 &= F_{n+1} \\
\left| \mathcal{F}_n \right|_1 &= F_{n-1} 
\end{align*}

and so the number of regular [even] and mirrored [odd] tiles in each strip of tiles is given by

\begin{align*}
\left| B_n \right|_{even} &= F_{2n+1} - 1 \\
\left| B_n \right|_{odd} &= F_{2n-1} - 1 \\
\end{align*}

An interesting exploration of the tiling properties of 1D Fibonacci sequences is found in \cite{baake2023fibonacci}. That exploration includes comparisons made between the aperiodic monotile and Fibonacci word sequences; in particular, the asymmetric role of 0 vs. 1 in the 1D Fibonacci case and regular vs. mirrored tiles in the 2D Hat/Turtle case. Next, we show how these comparisons are not coincidental: the 2D Turtle substitution rules can be derived directly from Fibonacci words.

For each $n \in \mathbb{N}$ consider the Cartesian product $B_n \times B_n \times B_n$. As above, $B_n$ refers to the $2n$-th Fibonacci word $\mathcal{F}_{2n}$ truncated to remove the final two characters (this is known as the central word).

Each element $(i,j,k) \in B_n \times B_n \times B_n$ labels a cube in an $n \times n \times n$ grid. We categorize the cubes in this 3D arrangement by the following types:

\begin{itemize}
    \item Type \emph{even} if $i=j=k=0$  [ coloured \crule[ColourEven]{0.5em}{0.5em} in fig.~\ref{fig:fibonaccicube}]
    \item Type \emph{odd} if $i=j=k=1$  [ coloured \crule[ColourOdd]{0.5em}{0.5em} ]
    \item Type \emph{gap} otherwise  [ coloured \crule[ColourGap]{0.5em}{0.5em}, or \crule[ColourGapRed]{0.5em}{0.5em} \& \crule[ColourGapBlack]{0.5em}{0.5em} ]
\end{itemize}

\begin{figure}[htb]
    \centering
    \includegraphics[width=0.25\columnwidth]{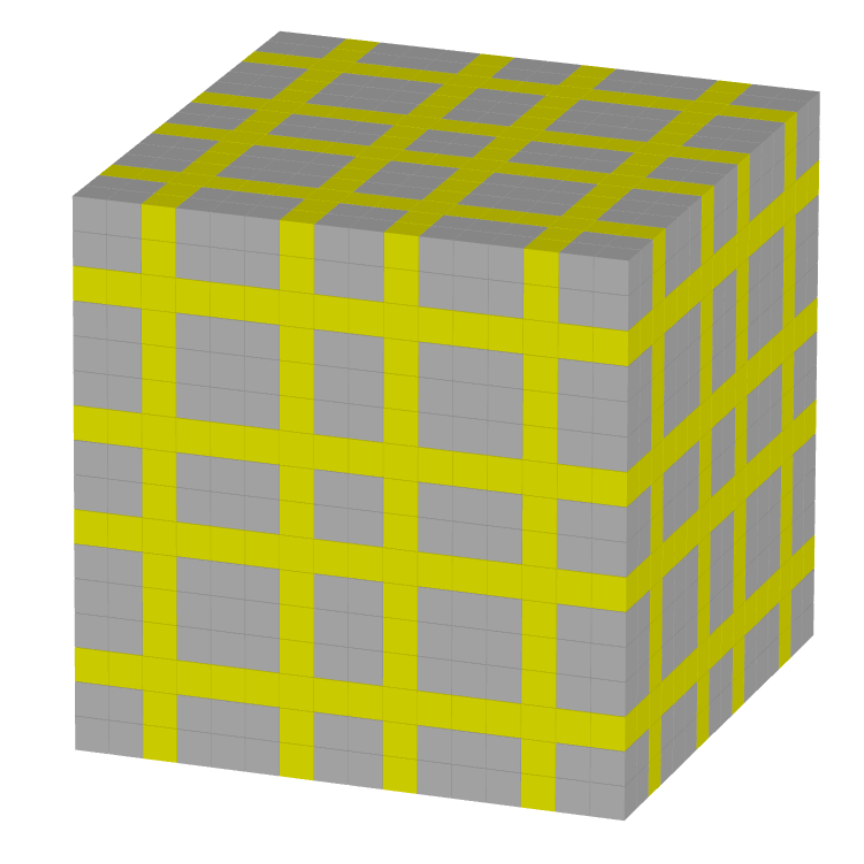}
    \caption{Fibonacci cube for $B_2=0010001001000100$}
    \label{fig:fibonaccicube}
\end{figure}

\bigskip
Slicing this cube just below the diagonal reveals the triangular Turtle substitution steps from the previous section.

\begin{figure}[htb]
    \centering
    \begin{tabular}{c c}
         \includegraphics[width=0.25\columnwidth]{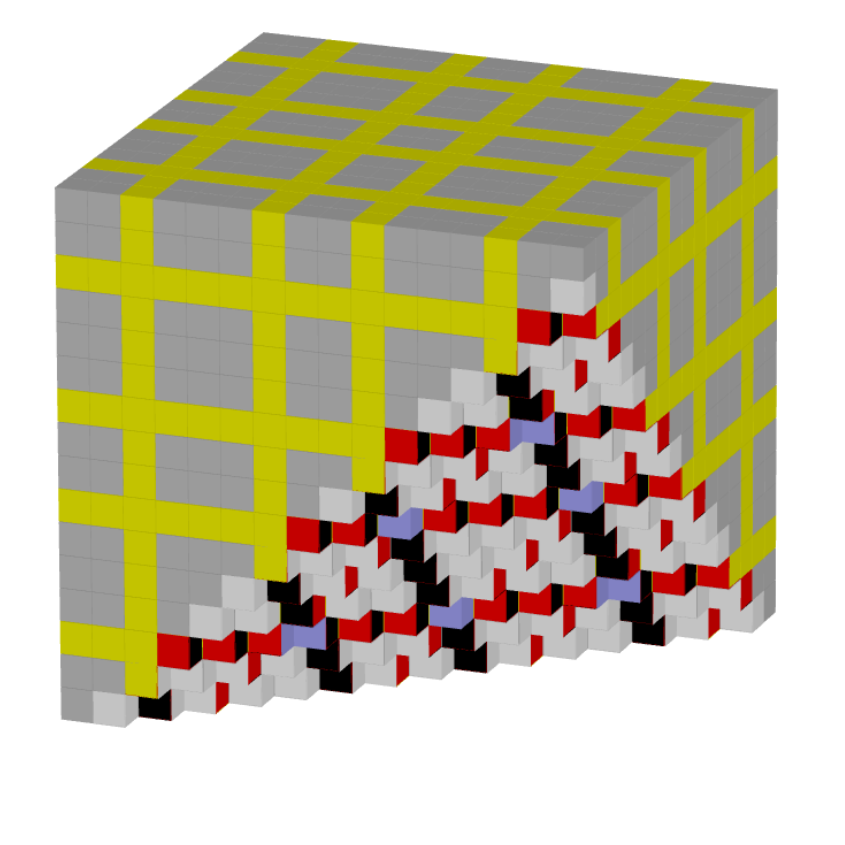}
         &
         \includegraphics[width=0.25\columnwidth]{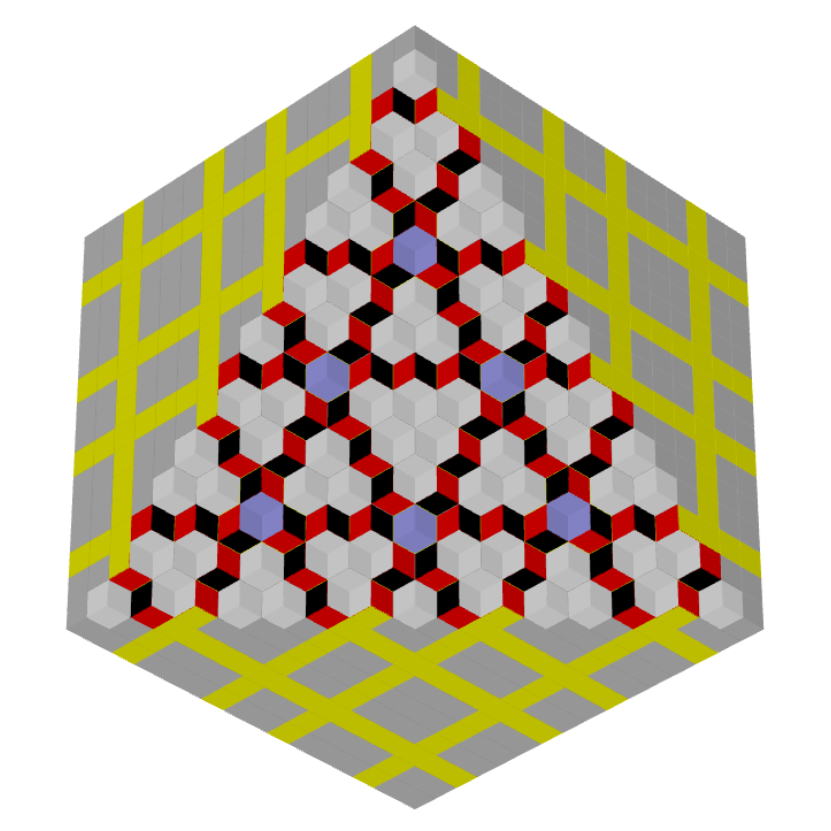}
    \end{tabular}
    \caption{Sliced Cube}
    \label{fig:fibonaccicubecut}
\end{figure}

An interactive 3D visualisation is provided at \url{https://jpdsmith.github.io/AperiodicCube/}

\begin{figure}[htb]
    \centering
    \begin{tabular}{ccc}
         \includegraphics[width=0.25\columnwidth]{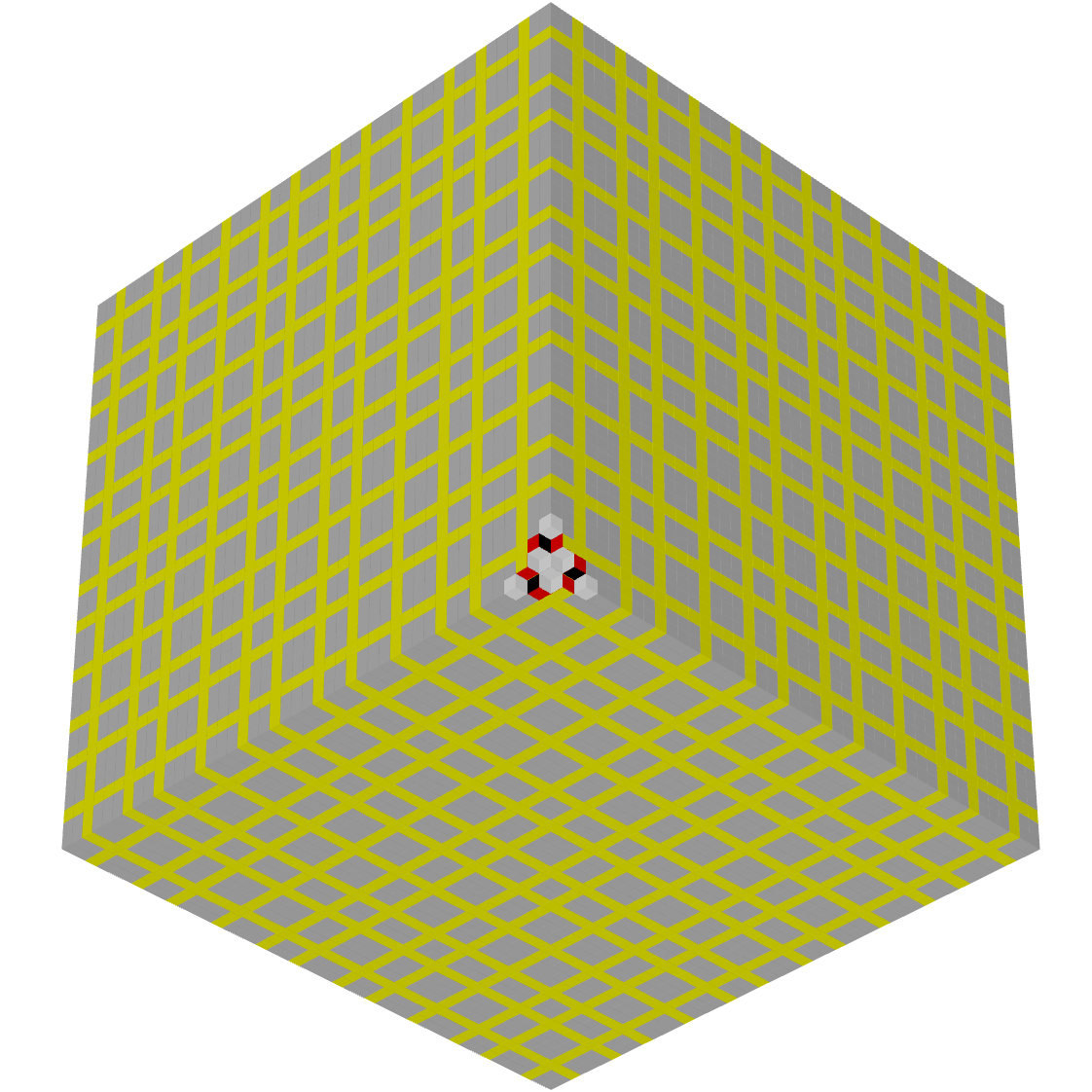}
         &
         \includegraphics[width=0.25\columnwidth]{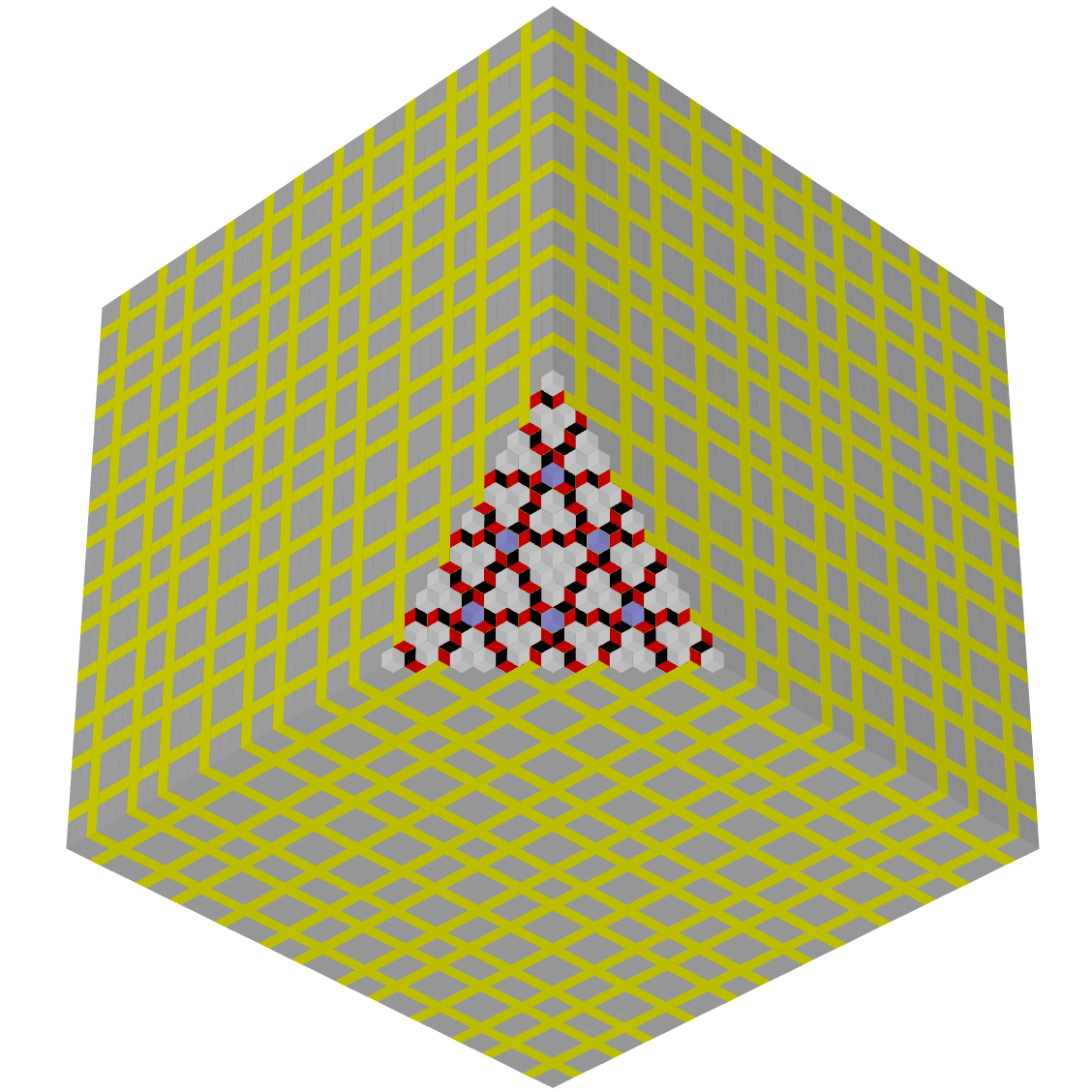}
         &
         \includegraphics[width=0.25\columnwidth]{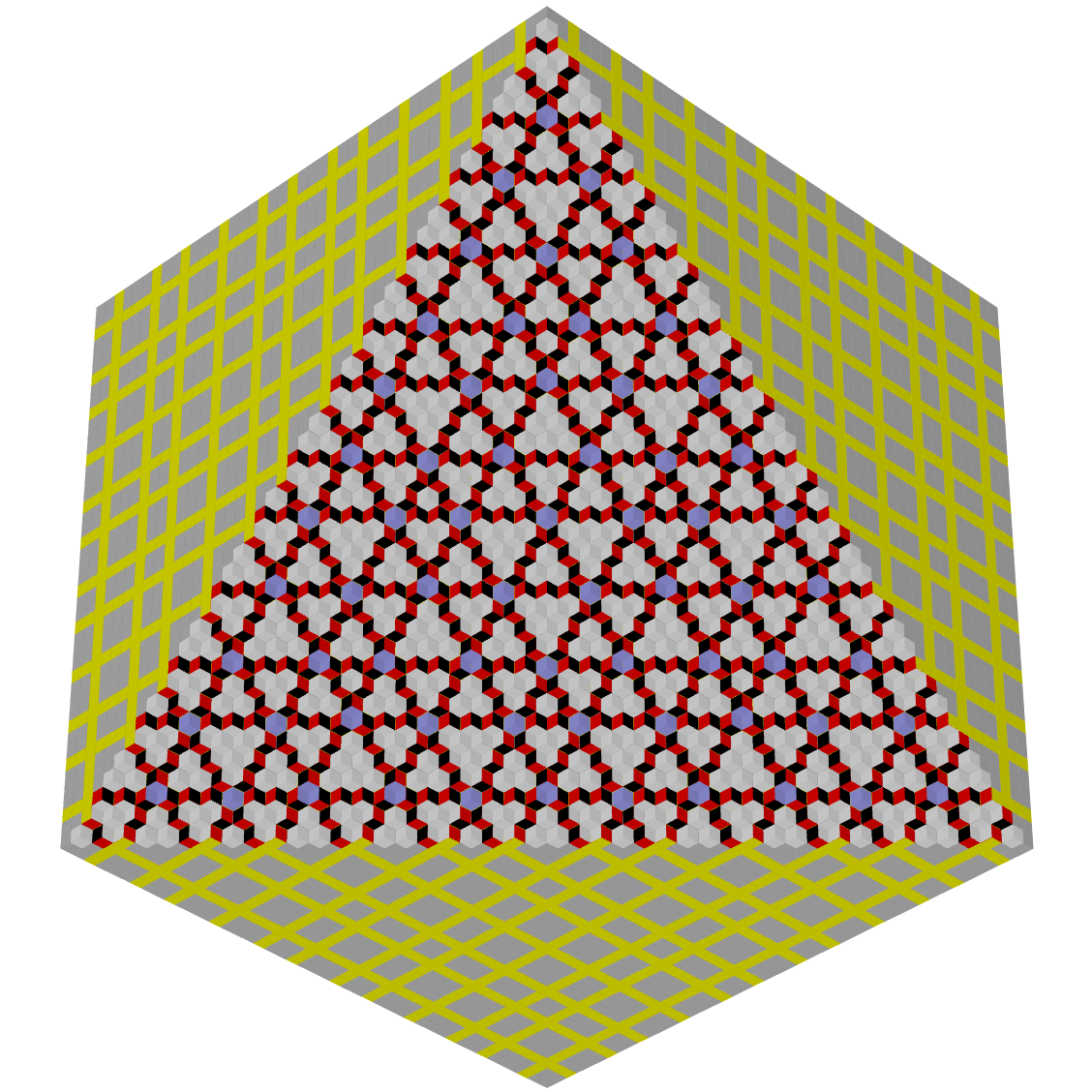}
         \\
    \end{tabular}
    \caption{$T_n$, for $n=1,2,3$ }
    \label{fig:fibonaccicube_slices}
\end{figure}

\subsubsection{Second proof of non-periodicity: via projection}

\begin{proof}
The Rhombille tiling can be viewed as the projection of faces of cubes to a plane. For example, a slice of cubes in $\mathbf{R}^3$ having vertices at $\mathbf{Z}^3$ can be defined that lies close to the plane defined by $x+y+z=0$ so that a projection to the same plane results in the Rhombille tiling. Avoiding a precise definition of this map, it is nevertheless clear that there are three projections $X$,$Y$ and $Z$: $\mathbf{R}^3 \rightarrow \mathbf{R}^2$ defined by $x$, $y$ or $z$ $\mapsto 0$ and that these three maps each project the vertices of the cubes to $\mathbf{Z}^2$. Abusing notation, this defines three maps $X, Y, Z:\mathbf{R}^2 \rightarrow \mathbf{R}^2$ that map the vertices of the Rhombille tiling to integer lattice points. Each projection maps one third of the rhombuses to squares and contracts the other two rhombuses to line segments.

\begin{figure}[htb]
    \centering
    \resizebox{0.3\columnwidth}{!}{\import{svg-inkscape}{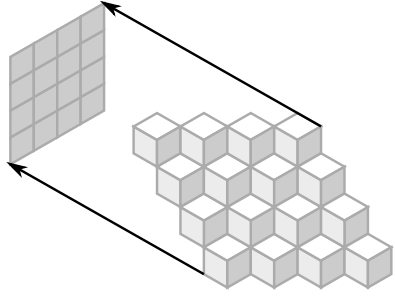}}
    \caption{One of three projections}
    \label{fig:projection}
\end{figure}

\begin{figure}[htb]
    \centering
    \resizebox{0.2\columnwidth}{!}{\import{svg-inkscape}{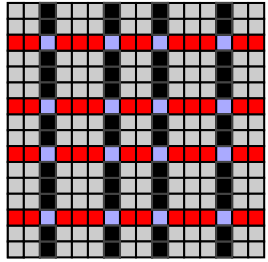}}
    \caption{A projection of the dual Turtle tiling modulo $nv_1$, $nv_2$}
    \label{fig:squareprojection}
\end{figure}

As in the first proof of non-periodicity, assuming the dual Turtle tiling is periodic, we can find some $n\in\mathbb{N}$ for which $nv_1$ and $nv_2$ preserve the Turtle tiling where $v_1$ and $v_2$ are basis vectors of translations preserving the Rhombille tiling. Modulo $nv_1$ and $nv_2$, the three distinct images under the projections $X$, $Y$ and $Z$ are three $n \times n$ square grids.

Each red\&black hex of a dual Turtle tiling consists of three rhombuses in three orientations, so each red\&black hex contributes one square to each $n \times n$ square grid image.

By definition, each GAB is a line of pairs of rhombuses. Consider the three images of each GAB under the projections $X$, $Y$ and $Z$. One image will be pairs of line segments (no area) and the other two images will be rows or columns of squares (with half the rhombs mapped to squares and the other half contracted to line segments).

Recall that tiling property~\ref{prop:intersectinggabs} says each flipped tile lies at the intersection of three GABs, and two non-parallel GABs intersect at a flipped tile. Under our three projections, this tiling property corresponds to the image of a flipped tile being a square lying at the intersection of a row and column. These rows and columns intersecting at the flipped tile are the images of GABs.

Following the first proof of non-periodicity, let $k$ be the number of GABs (modulo $nv_1$ and $nv_2$) in each direction. Writing $a=k$ and $b=n-k$, we find that each of the three $n\times n$ square grids is partitioned into
\begin{itemize}
    \item $a^2$ images of flipped hex tiles.
    \item $b^2$ images of non-flipped hex tiles.
    \item $ab$ images of black rhomb tiles from the GABs.   
    \item $ab$ images of red rhomb tiles from the GABs.    
\end{itemize}

The proportion of (red : black : red\&black) rhombs in the dual tiling is ($1:1:3$), and we now know these values to be ($ab$ : $ab$ : $a^2+b^2$). In particular, $a$ and $b$ are integer solutions of $a^2 + b^2 = 3ab$.

Since the only integer solutions are $a=0$ and $b=0$, we arrive at a contradiction. Alternatively, we can introduce the ratio of flipped:non-flipped tiles, $r:=\frac{a}{b}$, and solve $r^2+1=3r$ to get the irrational value $r=\frac{1}{2}(3\pm \sqrt{5})$. A contradiction, since $r$ is rational.

\end{proof}

So these projections can be used to show that periodic tilings are not possible. They can also show that non-periodic tilings \emph{are} possible.

If we take the Fibonacci cube construction defined earlier from the palindromic sequences $B_n$, then the projections of the resulting slice are of the form shown in Figure~\ref{fig:squarelowerdiagonal}: the projection consists of the squares that lie strictly below the diagonal. Note that the diagonal itself consists of the images of $a$ flipped hexes and $b$ non-flipped hexes (using notation from the proof). The proportion of (red : black : red\&black) rhombs is then
($ab$ : $ab$ : $a^2+b^2-(a+b)$). Whenever we have an integer solutions to $a^2 + b^2 -a - b= 3ab$, we find that the hexes and rhombs are in the correct proportions as required by a dual Turtle tiling.

\begin{figure}[htb]
    \centering
    \resizebox{0.2\columnwidth}{!}{\import{svg-inkscape}{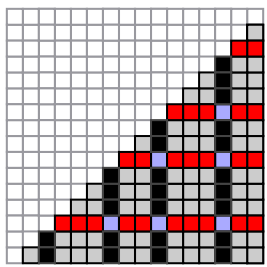}}
    \caption{Fibonacci cube projection}
    \label{fig:squarelowerdiagonal}
\end{figure}

Indeed, for the sequence $B_n$, we have $a = \left| B_n \right|_{odd} = F_{2n-1} - 1$ and $b=\left| B_n \right|_{even} = F_{2n+1} - 1$. Substituting $a = x-1$ and $b=y-1$ and rearranging, we require integer solutions to $xy = (x-y)^2 + 1$.

Fibonacci numbers satisfy Cassini's identity $F_{j-1}F_{j+1} = F_j^2 + (-1)^j$, which in the case of $j=2n$ gives
\[
F_{2n-1}F_{2n+1} = F_{2n}^2 + 1 = (F_{2n+1}-F_{2n-1})^2 +1
\]
so that $xy = (x-y)^2+1$ as required.

A full proof that the Fibonacci cube construction generates Turtle tilings would require some extra work. Specifically, we should show not just that the red rhombs and red\&black hexes appear in equal numbers, but that the 1:1 correspondence is from neighbouring tiles that can be joined into dual Turtles satisfying the matching rules. Since we've already encountered the same construction in a different guise in Section~\ref{section:turtlesubs}, a full proof is not attempted. 
\FloatBarrier
\section{The Hat}

With respect to the matching rules, the Hat tile is degenerate since the red rhombus $\mathcal{R}_{120-\alpha}$ collapses to a line as $\alpha \rightarrow 120^{\circ}$.

\begin{figure}[h]
    \centering
    \resizebox{0.1\columnwidth}{!}{\import{svg-inkscape}{hat_tiles_matching_rules_svg-tex.pdf_tex}}
\end{figure}

The Turtle tile, with $\alpha = 60^\circ$, acts as a canonical representative for the family of tiles. As we have seen in the previous section, the way that it lines up with an underlying Rhombille tiling makes it possible to prove aperiodicity.

However, despite the obvious degeneration, the Hat tile is also well behaved with respect to the matching rules. This is because the red\&black hexagon can be decomposed in to component rhombuses in a way that is complementary to the Turtle tile's decomposition. Figure~\ref{fig:hattilescomponents} shows this choice of component rhombuses with respect to which the Hat tiling `snaps' to a complementary Rhombille tiling

\begin{figure}[htb]
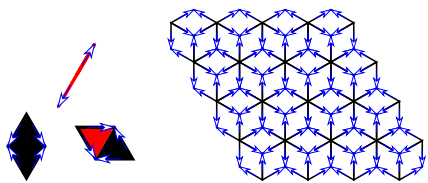

    \centering
    \begin{tabular}{cc}
        \resizebox{0.1\columnwidth}{!}{\import{svg-inkscape}{hat_tiles_components_svg-tex.pdf_tex}}
          &
         \resizebox{0.2\columnwidth}{!}{\import{svg-inkscape}{complementary_rhombic_tiling_svg-tex.pdf_tex}}
          \\
    \end{tabular}
    \caption{Components of the dual Hat tile set and their underlying rhombic lattice}
    \label{fig:hattilescomponents}
\end{figure}

\subsection{Alternative proofs of aperiodicity}

The proofs of non-periodicity in proposition \ref{proposotion:non-periodic}, can be informally summarized as follows:

\begin{itemize}
    \item In reality, the Turtle tiling exists and is uniquely forced by the matching rules.
    \item So study the effects of the matching rules to pin down implied tiling properties.
    \item Because the tiling is not periodic, we can collect enough basic properties to rule out the possibility of periodicity.
\end{itemize}

Previously, we ruled out periodicity by following the proof in \cite{akiyama2023alternative} and counting GABs. Since these occur with irrational frequency, they can't belong to a periodic tiling.

We then added a similar proof based on counting the ratio of flipped to non-flipped tiles.

The original paper \cite{smith2023aperiodic} contains a novel proof by comparing the two degenerate end points of the $\text{Tile(}a,b\text{)}$ family. We now add a similar proof by comparing the dual Turtle with the dual Hat tiling:

\subsubsection{Third proof of non-periodicity (sketch): morphing Turtles into Hats}
\begin{proof}
As before, assuming periodicity, we find some $n\in\mathbb{N}$ for which $nv_1$ and $nv_2$ preserve the Turtle tiling where $v_1$ and $v_2$ are basis vectors of translations preserving the Rhombille tiling. Following the previous proof, there are the same number ($k$) of GABs parallel to each of $v_1$ and $v_2$ [and also parallel to $\frac{1}{2}(v_1+v_2)$].

Letting $\alpha \rightarrow 120$, the dual Turtle tiling is transformed into a dual Hat tiling and the translations $nv_1$ and $nv_2$ are transformed into translations $t_1^\prime$ and $t_2^\prime$ that preserve both the dual Hat tiling and also \emph{its} underlying Rhombille tiling. As described in section 3 of \cite{smith2023aperiodic}, this transformation $nv_i \mapsto t_i^\prime$ is an affine map between the spaces of translations preserving the two (supposedly periodic) tilings.

Furthermore, we can track the affect of this affine transformation relative to paths crossing the tiles. As illustrated in Figure~\ref{fig:gab_paths}, paths crossing GABs are skewed relative to the direction of the GAB, whilst paths along the red\&black hexes are unaltered.

But the translation vectors $nv_1$ and $nv_2$ each cross exactly $k$ GABs and the orientations of tiles at each of these crossings is the same . So the affine transformation scales and rotates each of $nv_1$ and $nv_2$ by the same amount.

The Turtle tile set consists of 5 rhombuses whilst the Hat tile set has 4, so we see that the affine map scales areas by $\frac{4}{5}$. Since it acts on the vectors $nv_1$ and $nv_2$ equally, it must scale these vectors by $\frac{2}{\sqrt{5}}$.

We arrive at a contradiction because $\sqrt{5}$ can't appear as the distance of any translation vector preserving the triangular lattice that the vertices of the Rhombille tiling belong to. The proof in \cite{smith2023aperiodic} considered this for $\sqrt{2}$, but it holds more generally for the square root of any prime $p$ with $p\equiv 2$ (mod 3). The integers that \emph{can} appear as norms of this $A_2$ lattice are known as Löschian numbers, OEIS[A003136].
\end{proof}

\begin{figure}[ht]
    \centering
    \resizebox{0.15\columnwidth}{!}{\import{svg-inkscape}{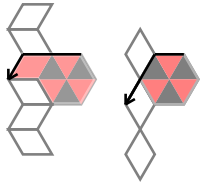}}
    \caption{Image of a GAB in the Hat tiling and the affect on a path crossing a GAB}
    \label{fig:gab_paths}
\end{figure}

\section{The Spectre}\label{section:spectre}

We conclude with a less formal section exploring the chiral aperiodic monotile, the Spectre, as discovered in \cite{smith2023chiral}. 

Tile(1,1) can tile the plane periodically using both regular and mirrored copies. But when restricting so that mirror images are not allowed, \cite{smith2023chiral} observes that the resulting Spectre tile is forced to tile the plane in a non-periodic way.

In seeking to better understand the Spectre, we take Theorem 3.1 of \cite{smith2023chiral} as our starting point: A Spectre tiling is combinatorially equivalent to a tiling by Hats and Turtles. In particular, we consider the ``Hats in Turtles" tiling: replacing the ``odd" Spectre tiles with Hats and the remainder with Turtles.

The dual version of this makes use of the dual Hat tiles introduced in Figure~\ref{fig:hattilescomponents}, but since the dual Hat always ocurrs together with its black rhomb, we can reduce the dual tile set from four elements to three as shown in Figure~\ref{fig:hatsinturtlestileset}.

\begin{figure}[htb]
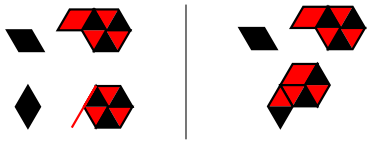

    \centering
    \begin{tabular}{c|c}
        \resizebox{0.1\columnwidth}{!}{\import{svg-inkscape}{dual_tile_set_4_svg-tex.pdf_tex}}
        \hspace{1em}
        &
        \hspace{1em}
        \resizebox{0.1\columnwidth}{!}{\import{svg-inkscape}{dual_tile_set_3_svg-tex.pdf_tex}}
        \\
    \end{tabular}
    \caption{`Hats in Turtles' dual Spectre tile set. Both sets are equivalent.}
    \label{fig:hatsinturtlestileset}
\end{figure}

We aim to draw parallels between Turtle and Spectre tilings. In particular, the ``Hats in Turtle" form of a Spectre tiling can be viewed as consisting of collections of Turtle tiles surrounding a seed tile. The seed tile is a Hat. This is similar to the Turtle tiling where the seed tile is a flipped Turtle. To make this observation a little more precise, note the similarity between Figure~\ref{fig:turtle_forced} for Turtles and Figure~\ref{fig:spectresforced} for Spectres (Hats in Turtles): Placing a single seed `Hat' tile in a Rhombille tiling, results in three sets of Ammann bars extending locally from  the seed tile. Near to the seed, the Turtle tiles are forced to arrange themselves along these Ammann bars. As shown in Figure~\ref{fig:spectresforced}, the result is a cluster of tiles that, if it is to tile the plane, must be forced to tile in a hexagonal arrangement.

In the case of the Turtle tiling, the Golden Ammann Bars (Figure~\ref{fig:hex_turtle_cluster}) pass through the flipped seed tile in a straight line. This results globally in straight Ammann bars in three directions across the Turtle tiling. However, for the Spectre (Hats in Turtles) tiling, the seed tile has the effect of nudging one of the Ammann bars sideways. On the final tiling, the result is Ammann bars that are not straight, but instead drift to the side.

\begin{figure}[htb]
    \centering
    \resizebox{0.7\columnwidth}{!}{\import{svg-inkscape}{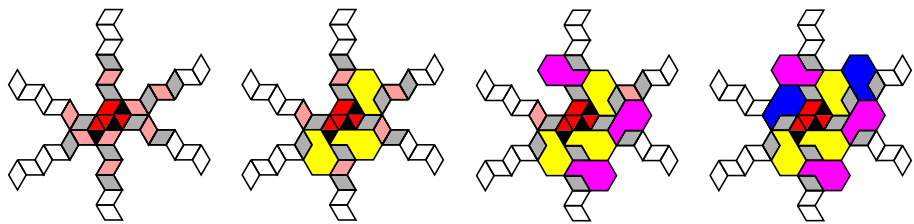}}
    \caption{Dual Turtle tiles forced to cluster along Ammann Bars around a dual Hat seed tile}
    \label{fig:spectresforced}
\end{figure}

These hexagonal arrangements of clusters are explored in detail in the original paper \cite{smith2023chiral}. Marking these hexagons with an arrow to keep track of their direction, and disallowing reflections, Figure~\ref{fig:hex_spectres} shows the first 4 steps of the spectre substitution rules from \cite{smith2023chiral}. These substitution rules can be explored with the interactive JavaScript app at \cite{csk_spectre_generations}.

\begin{figure}[htb]
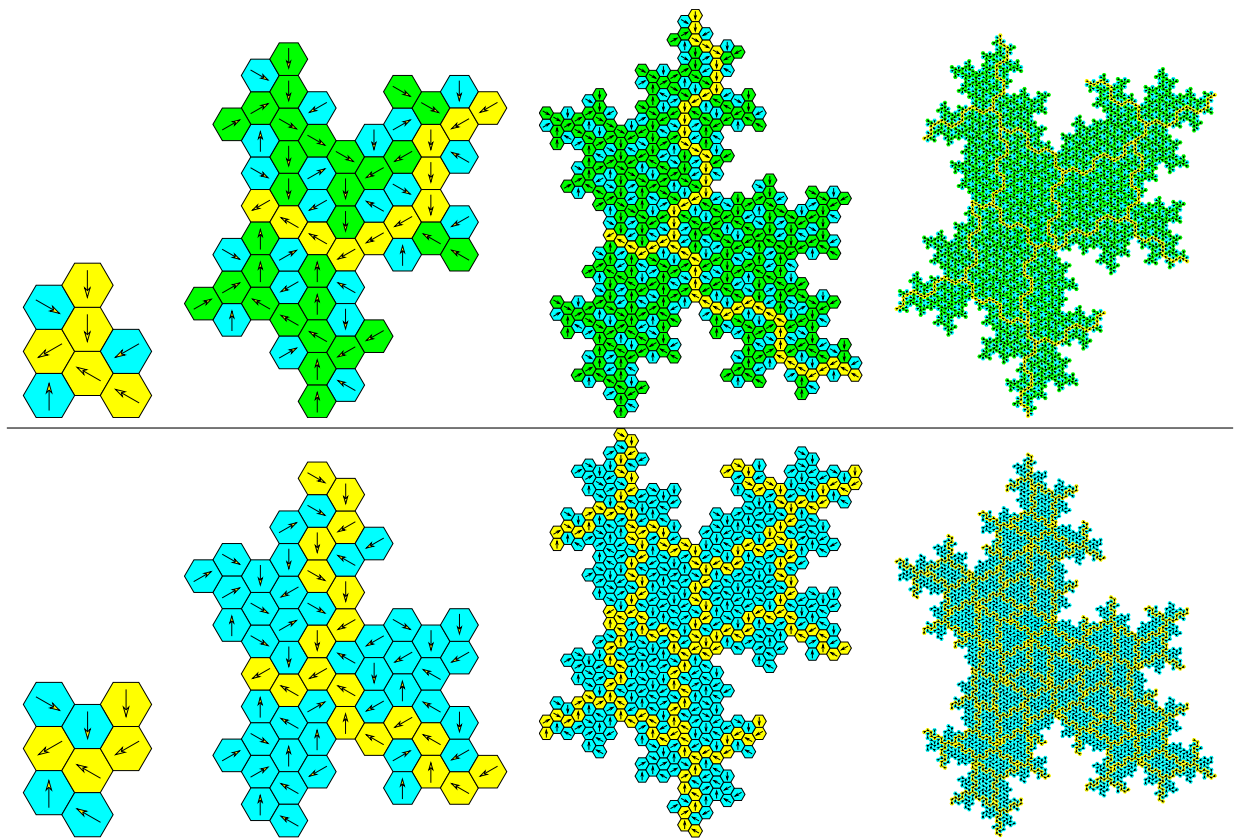

\centering
\begin{tabular}{cccc}
     \resizebox{0.1\columnwidth}{!}{\import{svg-inkscape}{a1_svg-tex.pdf_tex}}
     &
     \resizebox{0.25\columnwidth}{!}{\import{svg-inkscape}{a2flipped_svg-tex.pdf_tex}}
     &
     \resizebox{0.25\columnwidth}{!}{\import{svg-inkscape}{a3_svg-tex.pdf_tex}}
     &
     \resizebox{0.25\columnwidth}{!}{\import{svg-inkscape}{a4flipped_svg-tex.pdf_tex}}
     \\
     \hline
     \resizebox{0.1\columnwidth}{!}{\import{svg-inkscape}{w1flipped_svg-tex.pdf_tex}}
     &
     \resizebox{0.25\columnwidth}{!}{\import{svg-inkscape}{w2_svg-tex.pdf_tex}}
     &
     \resizebox{0.25\columnwidth}{!}{\import{svg-inkscape}{w3flipped_svg-tex.pdf_tex}}
     &
     \resizebox{0.25\columnwidth}{!}{\import{svg-inkscape}{w4_svg-tex.pdf_tex}}
     \\
\end{tabular}
\caption{Articulated and Wriggly Spectre hexagons}
\label{fig:hex_spectres}
\end{figure}

In labelling the hexagonal metatile with an arrow, there are six possible choices of direction available. Two of these choices are shown separately in Figure~\ref{fig:hex_spectres} with some paths highlighted. These paths reveal patterns that repeat after two steps of the Spectre substitution rule (each step of the spectre substitution rules mirror the previous arrangement and two steps returns the arrangement back the the previous orientation -- so it can be argued that this pair of steps forms one complete substitution).

These patterns hint at an alternative way of generating a Spectre tiling. In fact, we shall describe a conjugate pair of recursive systems. These recursions are introduced in the next section, and the sense in which they are \emph{conjugate} is discussed in Section~\ref{section:sturmian}.

\FloatBarrier

\subsection{New substitution systems}

This section introduces new sets of rules for tiling the plane with Spectres, as described in \cite{twin_worms}, arising from a collaborative effort with Erhard Künzel and Yoshiaki Araki. I had the pleasure of determining the recursive relations that define Conway worms, which are further explored in Section~\ref{section:sturmian}. The remaining shapes, substitutions, and many of the names introduced here stem from the creativity of Erhard Künzel and Yoshiaki Araki. Notably, Yoshiaki further enriched the work by adding a second ``wriggly" substitution system to complement the initial ``articulated" system, both of which we describe in this section.

Keeping with the theme of dual tilings, these rules are presented here as the dual Hats in Turtles. First, we introduce recursively defined strips of tiles that will act as generators for the system. These consist of two fixed elements which we label $E$ and $O$, and two strips of tiles, labelled $S$ and $I$.

The elements $E$ and $O$ both correspond to the Mystics of \cite{smith2023chiral} and so are identical pairs of Spectre tiles. However, in the final tiling, these Mystics appear in two ways: with the odd tile at the top or with it at the bottom and so we differentiate between even [$E$] and odd [$O$] Mystics.

To recursively define strips $S_k$ and $I_k$, we start with $S_0$ as empty and $I_0$ as a single tile. Then define recursions as shown in Figure~\ref{fig:SandI} by:

\begin{align*}
    I_{k+1} &= O S_{k} I_{k} S_{k} I_{k} S_{k} E \\
    S_{k+1} &= (S_k I_k S_k I_k S_k) E (S_k I_k S_k) O (S_k I_k S_k I_k S_k)
\end{align*}

\begin{figure}[htb]
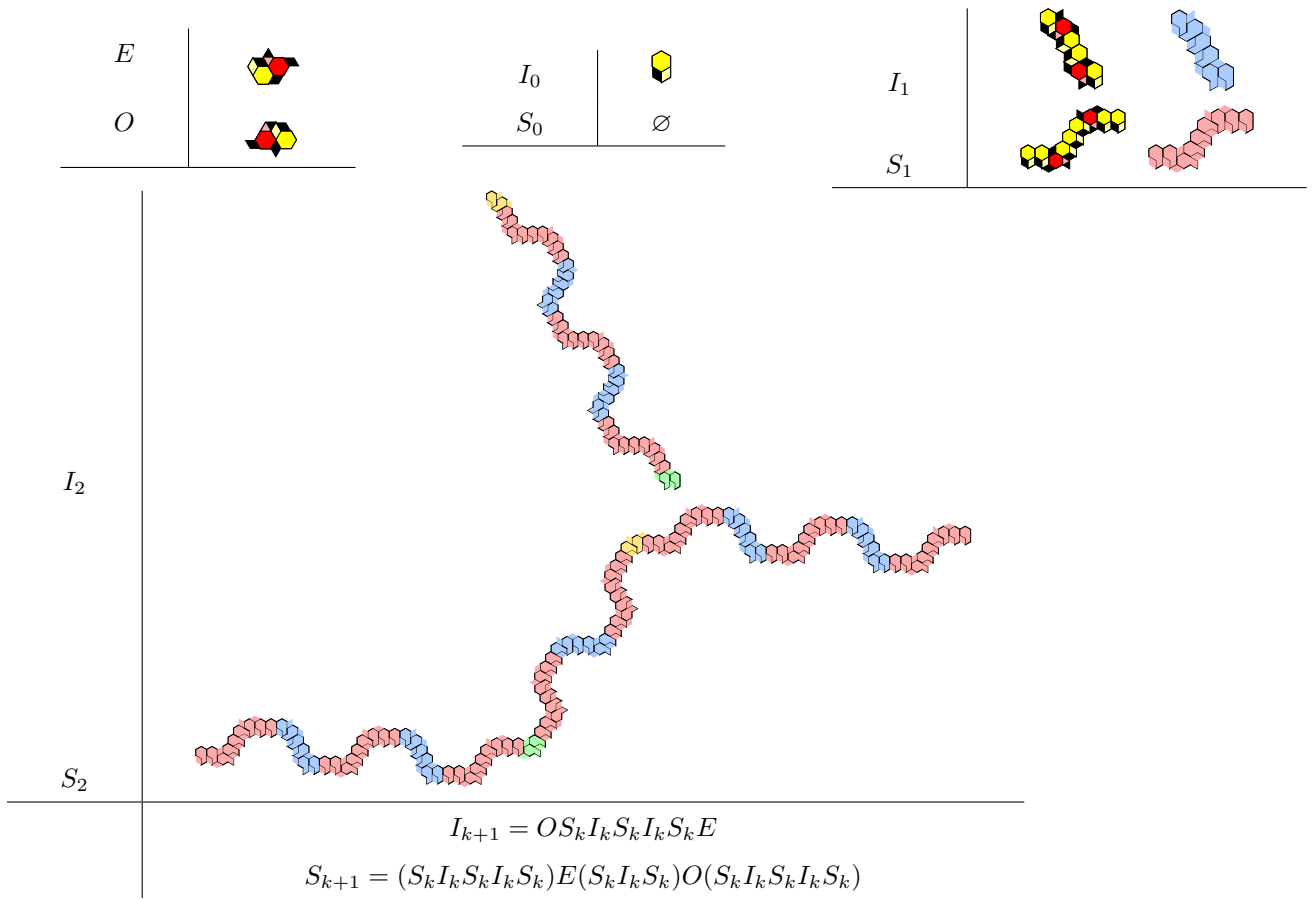

\setlength{\tabcolsep}{20pt}
\renewcommand{\arraystretch}{1.5}
\begin{tabular}{c c c}
    \begin{tabular}{c|c}
        $E$ & \rotatebox{150}{\resizebox{0.035\columnwidth}{!}{\import{svg-inkscape}{even_svg-tex.pdf_tex}}} \\
        $O$ & \rotatebox{150}{\resizebox{0.035\columnwidth}{!}{\import{svg-inkscape}{odd_svg-tex.pdf_tex}}} \\
        \hline
    \end{tabular}
    &
    \begin{tabular}{c|c}
        $I_0$ & \resizebox{0.015\columnwidth}{!}{\import{svg-inkscape}{I0_svg-tex.pdf_tex}} \\
        $S_0$ & $\varnothing$ \\
        \hline
    \end{tabular}
    &
    \begin{tabular}{c|c}
        $I_1$ & \resizebox{0.15\columnwidth}{!}{\import{svg-inkscape}{I1_svg-tex.pdf_tex}} \\
        $S_1$ & \resizebox{0.18\columnwidth}{!}{\import{svg-inkscape}{S1_svg-tex.pdf_tex}} \\
        \hline
    \end{tabular}
    \\
\end{tabular}
\\
\begin{tabular}{c|c}
$I_2$ & \resizebox{0.15\columnwidth}{!}{\import{svg-inkscape}{I2_svg-tex.pdf_tex}} \\
$S_2$ & \resizebox{0.6\columnwidth}{!}{\import{svg-inkscape}{S2_svg-tex.pdf_tex}} \\
\hline
& $I_{k+1}= O S_{k} I_{k} S_{k} I_{k} S_{k} E$ \\
& $S_{k+1} = (S_k I_k S_k I_k S_k) E (S_k I_k S_k) O (S_k I_k S_k I_k S_k) $ \\ 
\end{tabular}
\caption{Recursions for the generating elements $S$ and $I$}
\label{fig:SandI}
\end{figure}

In addition to the $S_k$ elements, it will be helpful to define two further strips labelled $N$ and $M$ and defined recursively as below and shown in Figure~\ref{fig:articulated_m_n}.

\begin{align*}
    N_{k+1} &= S_{k} I_{k} S_{k} \\
    M_{k+1} &= S_{k} I_{k} S_{k} I_{k} M_{k}
\end{align*}

\begin{figure}[htb]
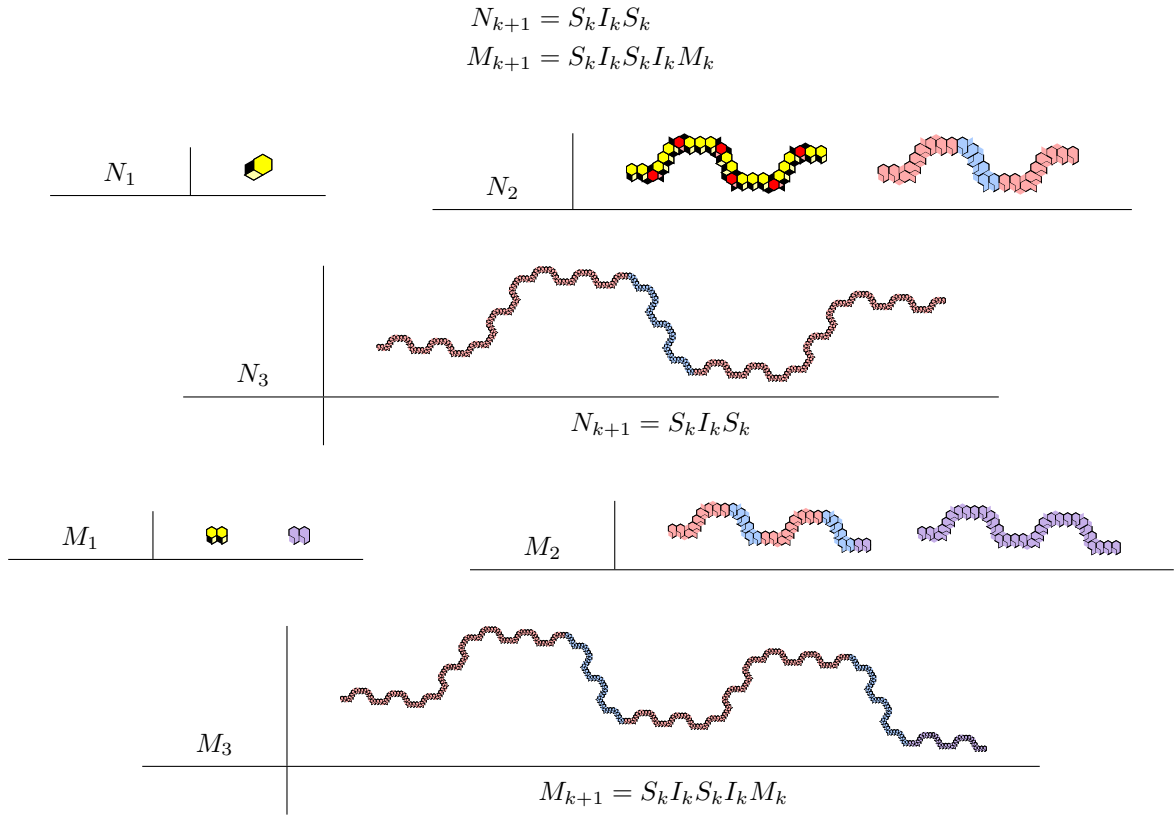

\setlength{\tabcolsep}{20pt}
\renewcommand{\arraystretch}{1.5}
\centering
\begin{tabular}{c c}
    \begin{tabular}{c|c}
    
        $N_1$ & \resizebox{0.022\columnwidth}{!}{\import{svg-inkscape}{N0_svg-tex.pdf_tex}} \\
        \hline
    \end{tabular}
    &
    \begin{tabular}{c|c}
        $N_2$ & \resizebox{0.35\columnwidth}{!}{\import{svg-inkscape}{N1_svg-tex.pdf_tex}}  \\
        \hline
    \end{tabular}
    \\
\end{tabular}
\\
\vspace{2em}
\begin{tabular}{c|c}
    $N_3$ & \resizebox{0.44\columnwidth}{!}{\import{svg-inkscape}{N2_svg-tex.pdf_tex}}  \\
    \hline
    & $N_{k+1}= S_{k} I_{k} S_{k}$ \\
\end{tabular}
\\
\vspace{2em}
\begin{tabular}{c c}
    \begin{tabular}{c|c}
        $M_1$ & \resizebox{0.08\columnwidth}{!}{\import{svg-inkscape}{M0_svg-tex.pdf_tex}} \\
        \hline
    \end{tabular}
    &
    \begin{tabular}{c|c}
        $M_2$ & \resizebox{0.35\columnwidth}{!}{\import{svg-inkscape}{M1_svg-tex.pdf_tex}} \\
        \hline
    \end{tabular}
\end{tabular}
\\
\vspace{2em}
\begin{tabular}{c|c}
    $M_3$ & \resizebox{0.5\columnwidth}{!}{\import{svg-inkscape}{M2_svg-tex.pdf_tex}} \\
    \hline
    & $M_{k+1}= S_{k} I_{k} S_{k} I_{k} M_{k}$ \\
\end{tabular}
\caption{Additional elements $M_k$ and $N_k$}
\label{fig:articulated_m_n}
\end{figure}

We can consider the elements $E$ and $O$ as zero dimensional since they are fixed and so become increasingly indistinguishable from points with respect to the expanding tiling. The elements $S_k$, $N_k$ and $M_k$, however, increase in length and can be considered as one dimensional components of the tiling. We now introduce six 2-dimensional components. These are labelled as PA, PB and TX for X in \{A,B,C,D\}. Here, P denotes a parallelogram-like shape with 2-fold symmetry and T denotes a triangular shape with 3-fold symmetry. Where more descriptive names are helpful, these shapes are called \emph{Rose} [TD], \emph{Rhomb} [PA], \emph{Large propeller} [TB], \emph{Small propeller} [TC], \emph{Penguin} [PB] and \emph{Bird} [TA].

Similarly to the Turtle recursive rules, the strips $S_k$ and $I_k$ are palindromes: the top and bottom edges of the strips are identical, and the start and ends edges differ. Three of the 2D shapes include a prototile protrusion that accept the strips in one direction as shown in Figure~\ref{fig:aligned}.

\begin{figure}[htb]
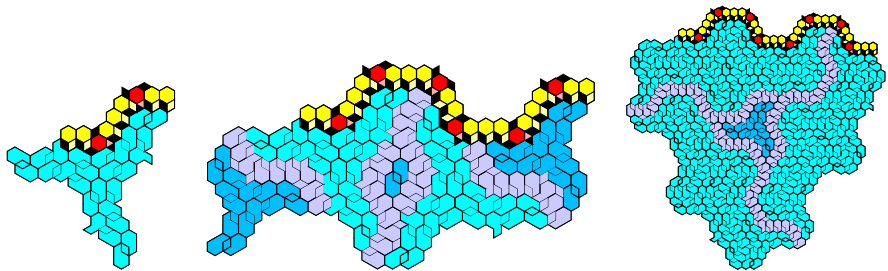

    \centering
    \begin{tabular}{ccc}
        \resizebox{0.13\columnwidth}{!}{\import{svg-inkscape}{Sjoin0_svg-tex.pdf_tex}}
        &
        \resizebox{0.3\columnwidth}{!}{\import{svg-inkscape}{Njoin_svg-tex.pdf_tex}}
        &
        \resizebox{0.2\columnwidth}{!}{\import{svg-inkscape}{Mjoin_svg-tex.pdf_tex}}
        \\
    \end{tabular}
    \caption{Conway worms forced to align with 2D shapes}
    \label{fig:aligned}
\end{figure}
\FloatBarrier

Figure~\ref{fig:three-step-system} shows the elegant substitution system discovered by Erhard Künzel. This consists of three steps repeating cyclically to cover the plane. In each step, a pair of shapes plus one smaller shape from the previous step are joined together by Conway worms of types $S$, $N$ or $M$. By cycling through these three steps, we return back to the first set of shapes. In this way, the substitutions can also be expressed in a single step using fewer shapes, but having a larger inflation factor. The one step substitution is explored in \cite{twin_worms} and also, to a lesser extent, below where we consider substitution matrices. 

\begin{figure}[htb]
\begin{tabular}{c c c}
$\mathrm{TD}_1$ \resizebox{0.03\columnwidth}{!}{\import{svg-inkscape}{TD0_svg-tex.pdf_tex}}
&
$\mathrm{TD}_2$ \includegraphics[scale=0.25]{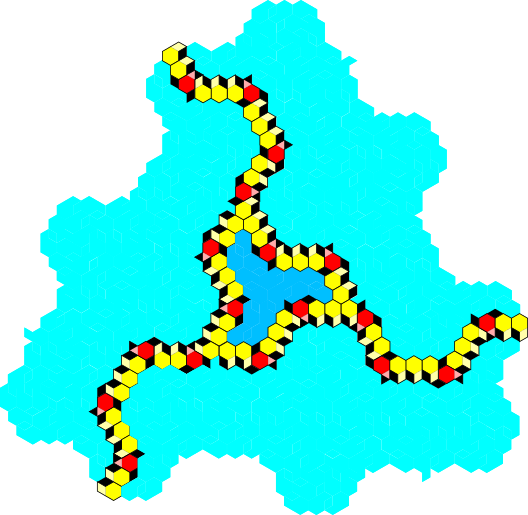}
&
$\mathrm{td}_{k} = 3\mathrm{pb}_{k-1} + \mathrm{tc}_{k-1} + 3\mathrm{n}_k$ 
\\
$\mathrm{PA}_1$ \rotatebox{120}{\resizebox{0.02\columnwidth}{!}{\import{svg-inkscape}{PA0_svg-tex.pdf_tex}}}
&
$\mathrm{PA}_2$ \includegraphics[scale=0.25]{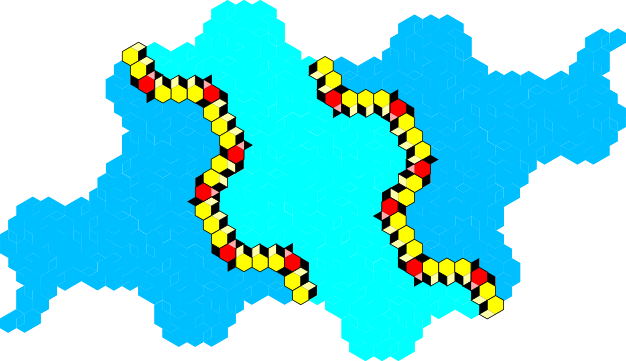}
& $\mathrm{pa}_{k} = \mathrm{pb}_{k-1} + 2\mathrm{ta}_{k-1} + 2\mathrm{n}_k$
\\
\hline
$\mathrm{TB}_1$ \resizebox{0.15\columnwidth}{!}{\import{svg-inkscape}{TB0_svg-tex.pdf_tex}}
&
$\mathrm{TB}_2$ \includegraphics[scale=0.1]{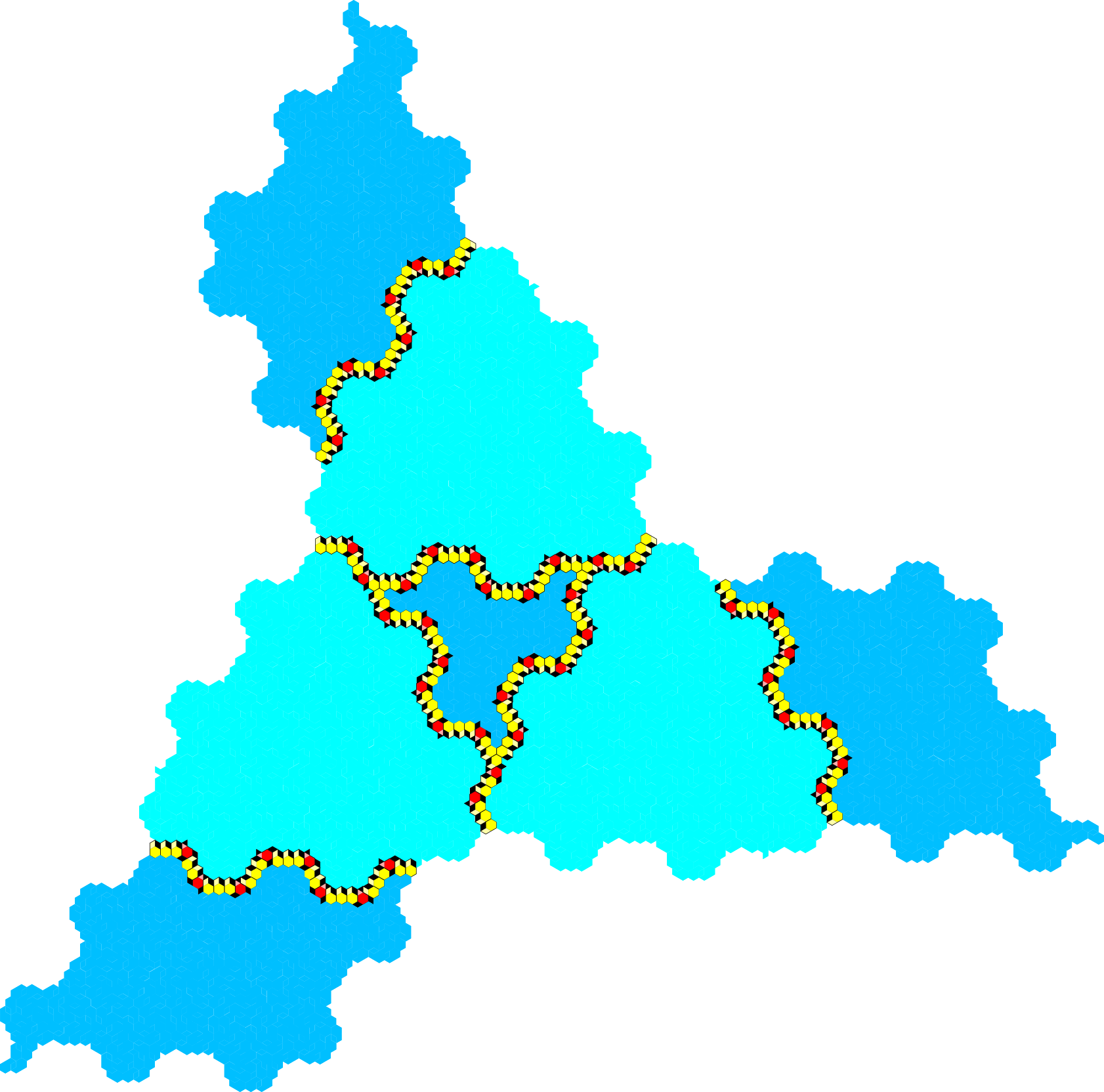}
&
$\mathrm{tb}_{k} = 3\mathrm{td}_k + 3\mathrm{pa}_k + \mathrm{ta}_{k-1} + 6\mathrm{m}_k $
\\
$\mathrm{TC}_1$ \resizebox{0.085\columnwidth}{!}{\import{svg-inkscape}{TC0_svg-tex.pdf_tex}}
& 
$\mathrm{TC}_2$ \includegraphics[scale=0.1]{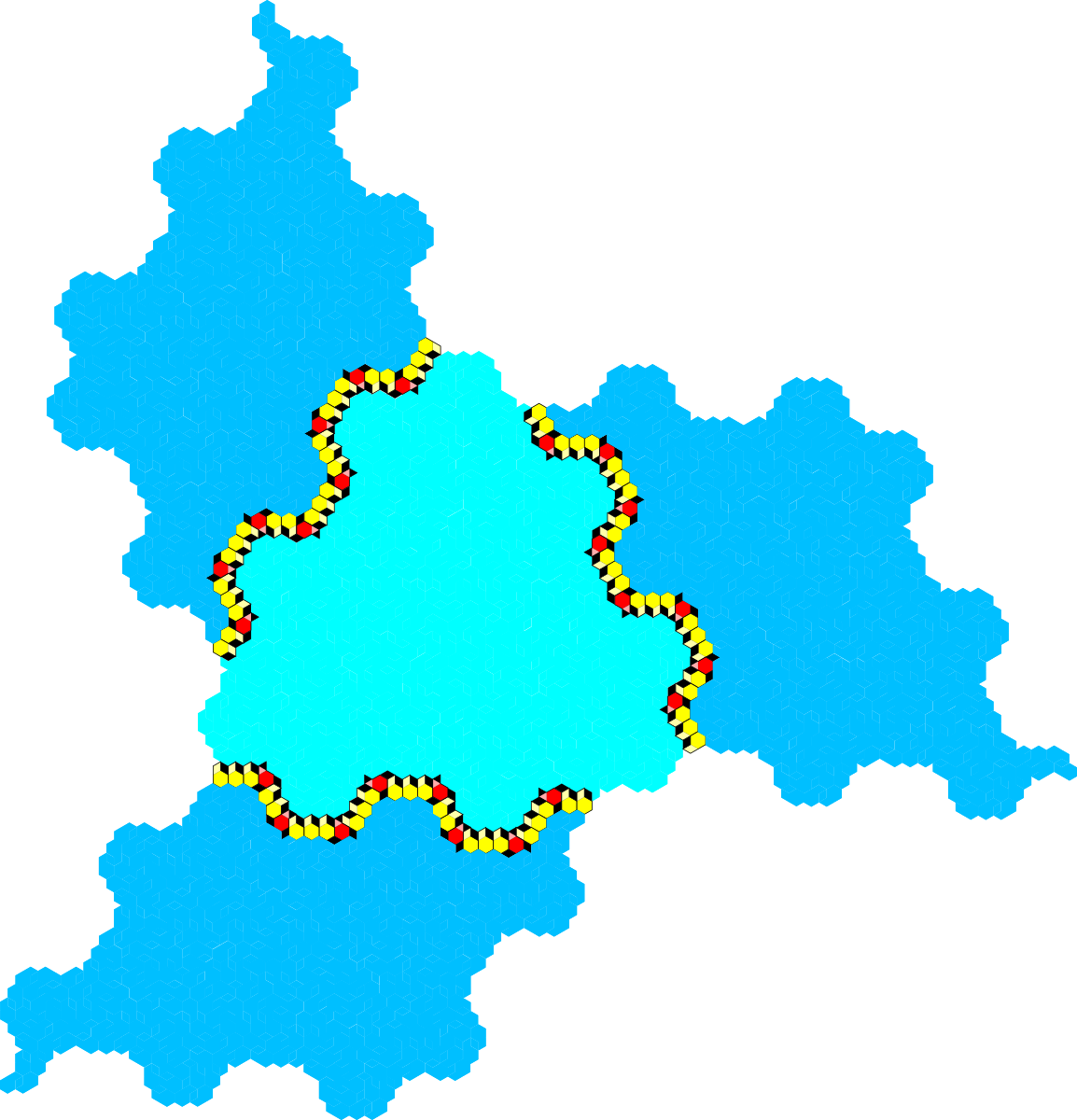}
&
$\mathrm{tc}_{k} = \mathrm{td}_k + 3\mathrm{pa}_k + 3\mathrm{m}_k$
\\
\hline

$\mathrm{PB}_1$ \resizebox{0.18\columnwidth}{!}{\import{svg-inkscape}{PB0_svg-tex.pdf_tex}}
&
$\mathrm{PB}_2$ \includegraphics[scale=0.04375,angle=120]{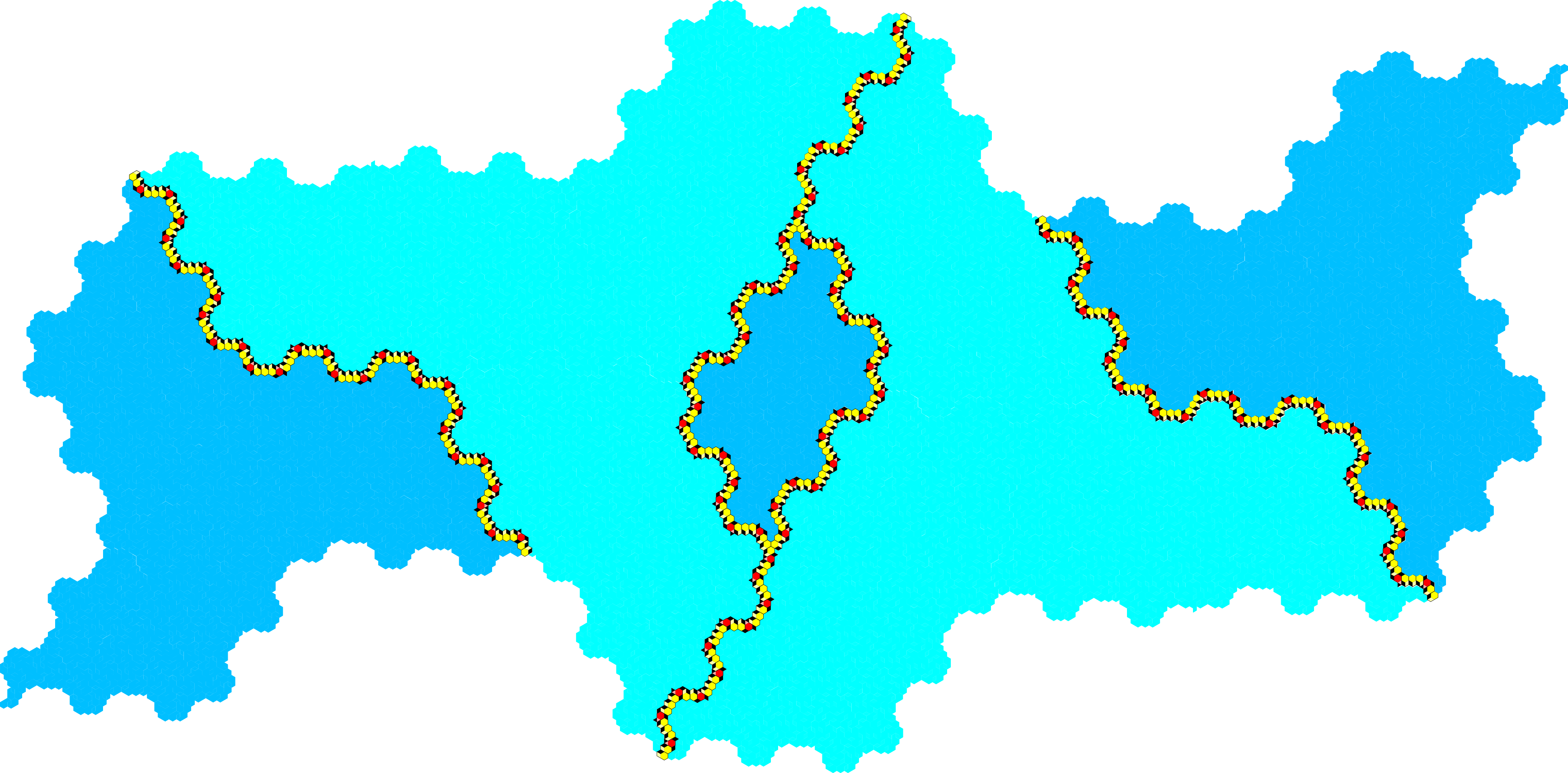}
&
$\mathrm{pb}_{k} = 2\mathrm{tb}_k + 2\mathrm{tc}_k + \mathrm{pa}_k + 4\mathrm{s}_k$
\\
$\mathrm{TA}_1$ \resizebox{0.17\columnwidth}{!}{\import{svg-inkscape}{TA0_svg-tex.pdf_tex}}
&
$\mathrm{TA}_2$ \includegraphics[scale=0.04375]{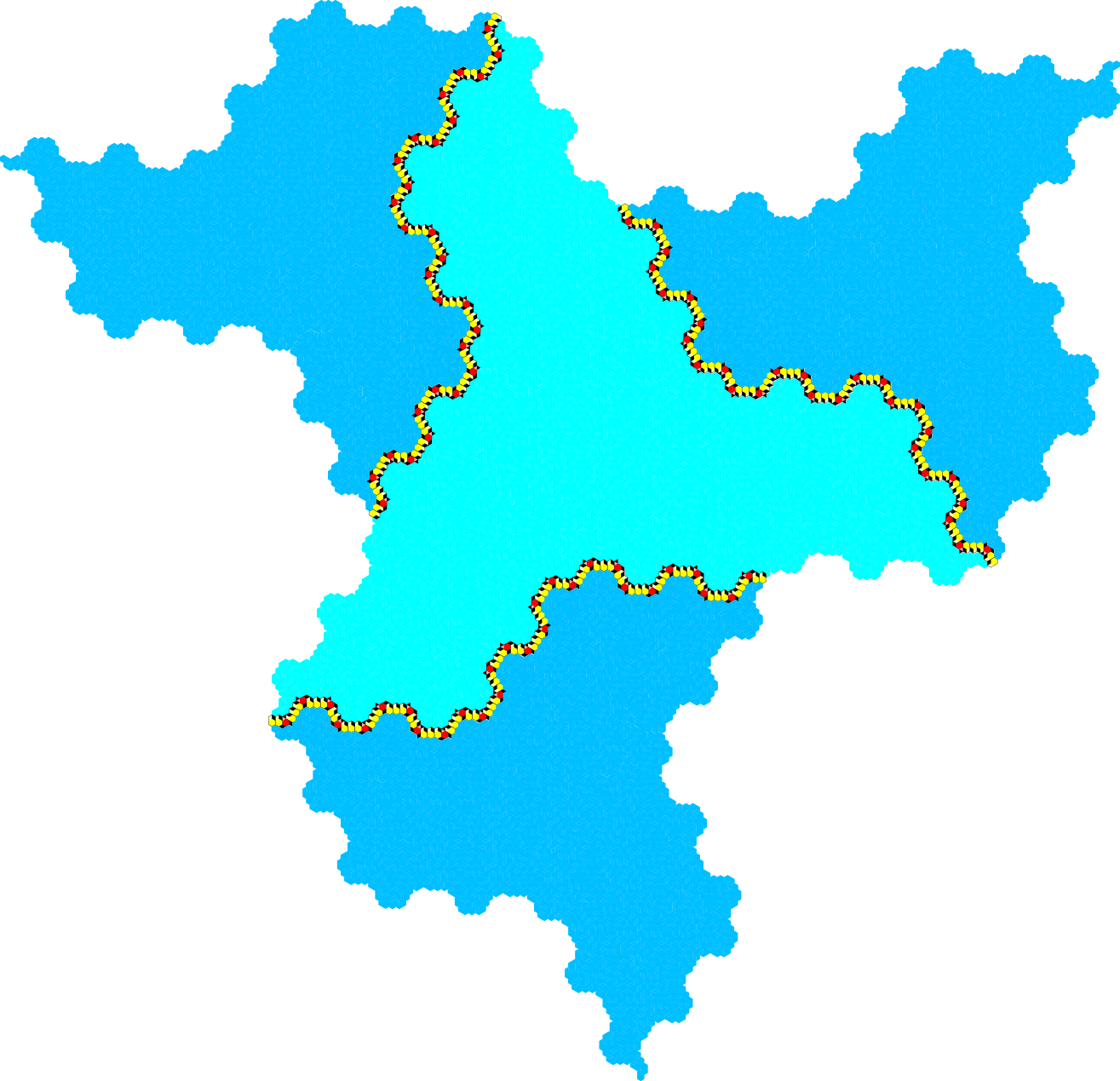}
&
$\mathrm{ta}_{k} = \mathrm{tb}_k + 3\mathrm{tc}_k + 3\mathrm{s}_k$
\\\vspace{1em}
\end{tabular}
\caption{Three-step Spectre substitution system}
\label{fig:three-step-system}
\end{figure}

\FloatBarrier

\begin{figure}[htb]
    \centering
        \resizebox{0.9\columnwidth}{!}{\import{svg-inkscape}{TC2_opt_svg-tex.pdf_tex}}
    \caption{$\text{TC}_2$}
    \label{fig:tc2}
\end{figure}
\FloatBarrier

These rules may be described as \emph{substitutions} as they are combinatorial substitutions. The word \emph{recursion} may be more fitting since the tiling is generated in the same recursive manner as a 1D Sturmian word. Nevertheless, we can write down the substitution matrix for these rules. The dimension of this matrix is large since there are many 0D, 1D and 2D pieces taking part in the substitutions.

Following \cite{twin_worms}, we using lower case letters to indicate the number of tiles in each shape and find the following numeric recursions:

\begin{alignat*}{2}
    \mathrm{td}_{k} &= 3\mathrm{pb}_{k-1} + \mathrm{tc}_{k-1} && + 3\mathrm{n}_k \\
    \mathrm{pa}_{k} &= \mathrm{pb}_{k-1} + 2\mathrm{ta}_{k-1} && + 2\mathrm{n}_k \\
    \mathrm{tb}_{k} &= 3\mathrm{td}_k + 3\mathrm{pa}_k + \mathrm{ta}_{k-1} && + 6\mathrm{m}_k \\
    \mathrm{tc}_{k} &= \mathrm{td}_k + 3\mathrm{pa}_k && + 3\mathrm{m}_k \\
    \mathrm{pb}_{k} &= 2\mathrm{tb}_k + 2\mathrm{tc}_k + \mathrm{pa}_k && + 4\mathrm{s}_k \\
    \mathrm{ta}_{k} &= \mathrm{tb}_k + 3\mathrm{tc}_k && + 3\mathrm{s}_k \\
\end{alignat*}

Since the main use of the substitution matrix is to read off the inflation factor, we can save space by considering the 2D components in isolation. For large $k$, the 2D elements approximate fractals and the 1D elements added in each step have vanishing area and so don't contribute to the inflation factor. By ignoring the 0D and 1D elements, we're considering the substitution matrix satisfied my the fractal in the limit $k \rightarrow \infty$.

With respect to an ordering $(\mathrm{td}_k, \mathrm{pa}_k, \mathrm{ta}_{k-1}, \mathrm{tb}_k,\mathrm{tc}_k, \mathrm{pa}_{k-1} , \mathrm{pb}_{k}, \mathrm{ta}_{k}, \mathrm{tc}_{k-1})$, the substitution matrix $M$ is given by

\begin{align*}
M = \begin{bmatrix}
    0 & 0 & A \\
    B & 0 & 0 \\
    0 & C & 0 \\
\end{bmatrix} \hspace{2em} \text{where} \hspace{2em}
A = 
\begin{bmatrix}
3 & 0 & 1 \\
1 & 2 & 0 \\  
0 & 1 & 0 \\
\end{bmatrix}\hspace{1em}
B =
\begin{bmatrix}
3 & 3 & 1 \\
1 & 3 & 0 \\
0 & 1 & 0 \\
\end{bmatrix}\hspace{1em}
C =
\begin{bmatrix}
2 & 2 & 1 \\
1 & 3 & 0 \\ 
0 & 1 & 0 \\
\end{bmatrix}
\\
\end{align*}

This has characteristic polynomial $(x^3-1) (x^6 - 62 x^3 + 1)$ with dominant eigenvalue $(4+\sqrt{15})^{2/3}$.

Raising $M$ to the third power corresponds to cycling through the set of three pairs of substitutions to get a reduced substitution system that avoids needing to involve all shapes.
\[
    M^ 3 = \begin{bmatrix}
        ACB & 0 & 0 \\
        0 & BAC & 0 \\
        0 & 0 & CAB \\
    \end{bmatrix}
\]

\[
ACB = \begin{bmatrix}
  25 & 42 & 6 \\
  20 & 37 & 4 \\  
  6 & 12 & 1 \\
\end{bmatrix}\hspace{1em}
BAC = \begin{bmatrix}
31 & 48 & 12 \\
18 & 31 & 6 \\
4 & 8 & 1 \\
\end{bmatrix}\hspace{1em}
CAB =
\begin{bmatrix}
37 & 28 & 8 \\
30 & 25 & 6 \\ 
6 & 6 & 1 \\
\end{bmatrix}
\]

Each 3x3 matrix here has characteristic polynomial $(x-1) (x^2 - 62 x + 1)$ with dominant eigenvalue $(4+\sqrt{15})^2$. This corresponds to fractals following the substitutions shown in Figure~\ref{fig:spectrelimits}.

\begin{figure}[htb]
    \centering
    \begin{tabular}{c}  
        \includegraphics[width=0.9\linewidth]{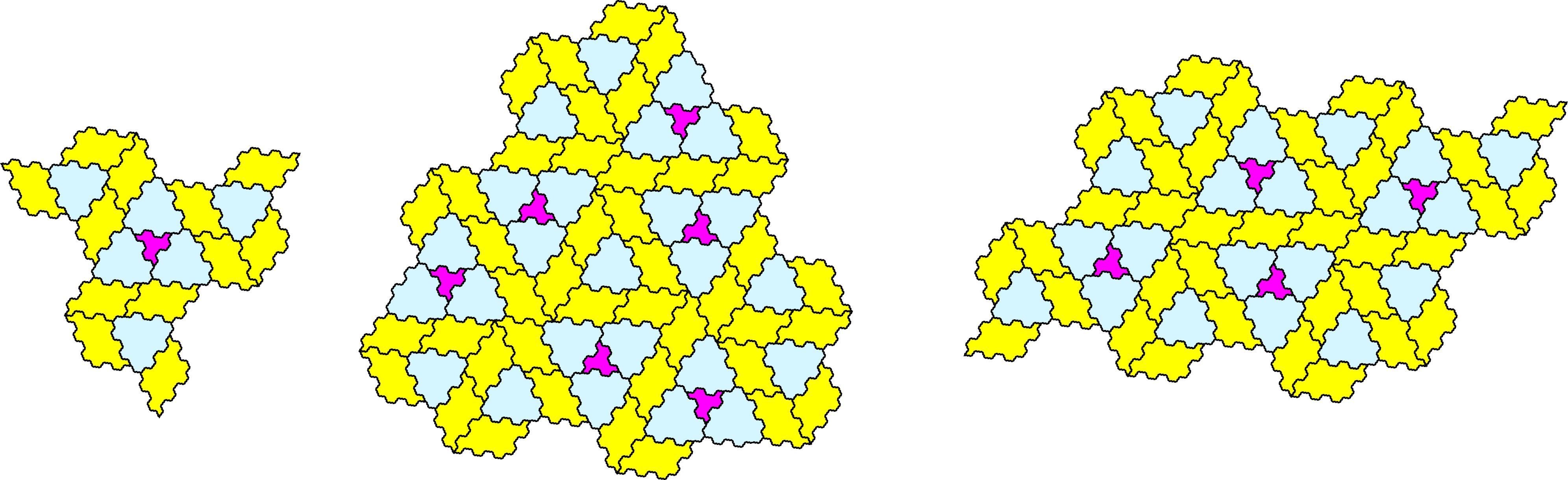} \\
        \\
        \hline
        \\
        \vspace{1em}
        \includegraphics[width=0.9\linewidth]{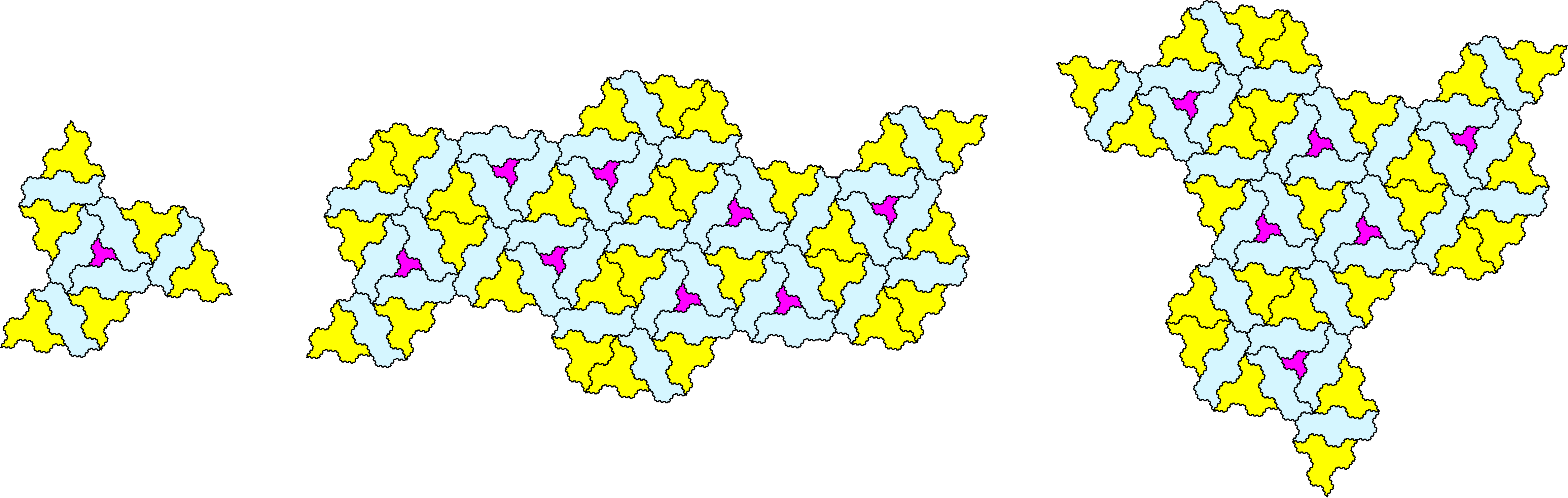} \\
        \\
        \hline
        \\
        \vspace{1em}
        \includegraphics[width=0.9\linewidth]{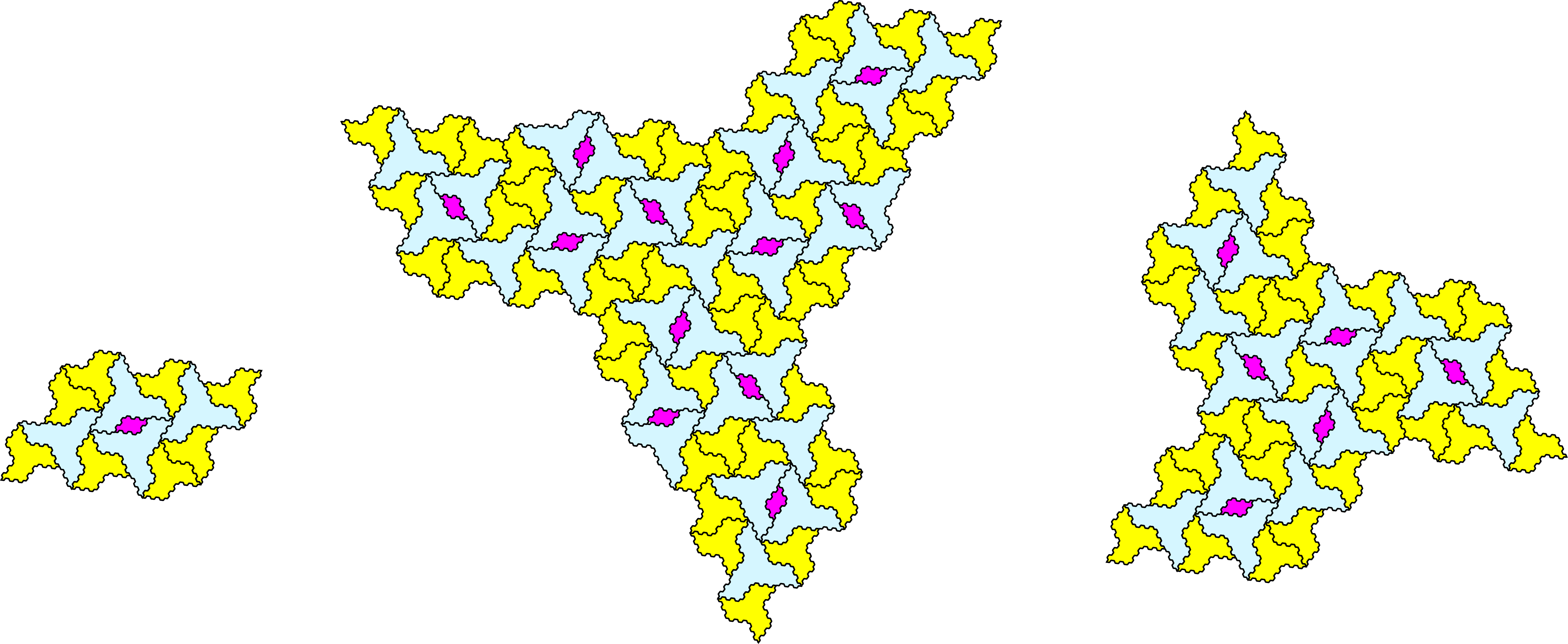} \\
    \end{tabular}
    \caption{Spectre limit substitutions.}
    \label{fig:spectrelimits}
\end{figure}

\FloatBarrier
\subsubsection{Wriggly recursion}

We now show a second substitution system due to Yoshiaki Araki. This has the exact same recursion relations, but with different initial values. This system builds more wriggly Conway worms compared to the articulated worms of the previous system. Figures~\ref{fig:patch-a} and \ref{fig:patch-w} show these side by side as Spectre tiles.

\begin{figure}[htb]
\setlength{\tabcolsep}{20pt}
\renewcommand{\arraystretch}{1.5}
\begin{tabular}{c|c}
$E$ & \rotatebox{150}{\resizebox{0.022\columnwidth}{!}{\import{svg-inkscape}{wEven_svg-tex.pdf_tex}}} \\
$O$ & \rotatebox{150}{\resizebox{0.022\columnwidth}{!}{\import{svg-inkscape}{wOdd_svg-tex.pdf_tex}}} \\
\vspace{2em}\\
$I_0$ & \resizebox{0.08\columnwidth}{!}{\import{svg-inkscape}{wI0_svg-tex.pdf_tex}} \\
$S_0$ &  \resizebox{0.08\columnwidth}{!}{\import{svg-inkscape}{wS0_svg-tex.pdf_tex}} \\
\hline
\vspace{2em}\\
$I_1$ & \resizebox{0.27\columnwidth}{!}{\import{svg-inkscape}{wI1_svg-tex.pdf_tex}} \\
$S_1$ & \resizebox{0.5\columnwidth}{!}{\import{svg-inkscape}{wS1_svg-tex.pdf_tex}} \\
\hline
\vspace{2em}\\
$I_2$ & \resizebox{0.15\columnwidth}{!}{\import{svg-inkscape}{wI2_svg-tex.pdf_tex}} \\
$S_2$ & \resizebox{0.55\columnwidth}{!}{\import{svg-inkscape}{wS2_svg-tex.pdf_tex}} \\
\hline
& $I_{k+1}= O S_{k} I_{k} S_{k} I_{k} S_{k} E$ \\
& $S_{k+1} = (S_k I_k S_k I_k S_k) E (S_k I_k S_k) O (S_k I_k S_k I_k S_k) $ \\ 
\end{tabular}
\end{figure}

\begin{figure}[htb]
\setlength{\tabcolsep}{20pt}
\renewcommand{\arraystretch}{1.5}
\begin{tabular}{cc}
    \begin{tabular}{c|c}
    $N_1$ & \resizebox{0.06\columnwidth}{!}{\import{svg-inkscape}{wN0_svg-tex.pdf_tex}} \\
    \hline
    \vspace{2em}\\
    $N_2$ & \resizebox{0.15\columnwidth}{!}{\import{svg-inkscape}{wN1_svg-tex.pdf_tex}} \\
    \hline
    & $N_{k+1}= S_{k} I_{k} S_{k}$ \\
    \end{tabular}
    
    &

    \begin{tabular}{c|c}
    $M_1$ & \resizebox{0.1\columnwidth}{!}{\import{svg-inkscape}{wM0_svg-tex.pdf_tex}} \\
    \hline
    \vspace{2em}\\
    $M_2$ & \resizebox{0.2\columnwidth}{!}{\import{svg-inkscape}{wM1_svg-tex.pdf_tex}} \\
    \hline
    & $M_{k+1}= S_{k} I_{k} S_{k} I_{k} M_{k}$ \\
    \end{tabular}

\end{tabular}

\end{figure}

\begin{figure}[htb]
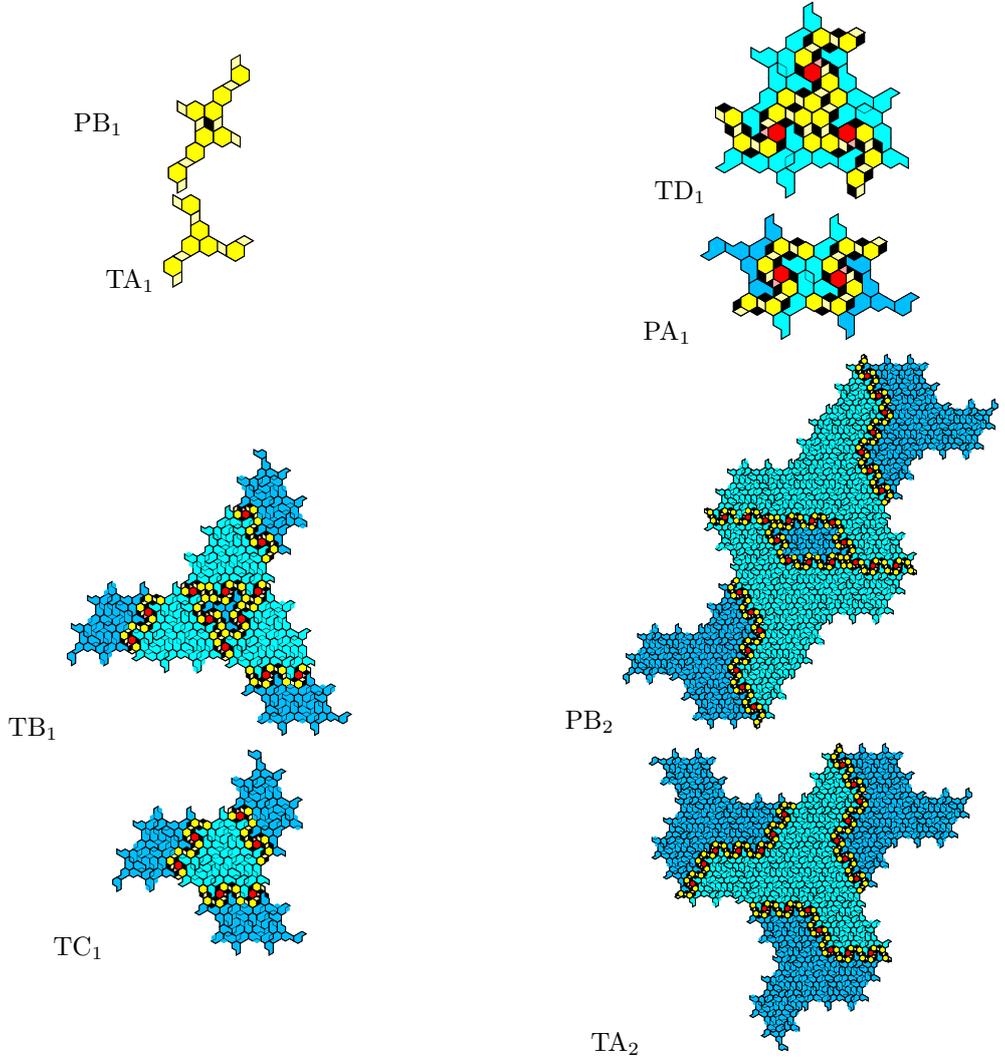

\setlength{\tabcolsep}{20pt}
\renewcommand{\arraystretch}{1.5}
\centering
\begin{tabular}{c c}
    \begin{tabular}{c}
        $\mathrm{PB}_1$ \rotatebox{120}{\resizebox{0.07\columnwidth}{!}{\import{svg-inkscape}{wPB0_svg-tex.pdf_tex}}} \\
        $\mathrm{TA}_1$ \resizebox{0.07\columnwidth}{!}{\import{svg-inkscape}{wTA0_svg-tex.pdf_tex}} \\
    \end{tabular}
    & 
    \begin{tabular}{c}
        $\mathrm{TD}_1$ \resizebox{0.15\columnwidth}{!}{\import{svg-inkscape}{wTD0_svg-tex.pdf_tex}} \\
        $\mathrm{PA}_1$ \resizebox{0.17\columnwidth}{!}{\import{svg-inkscape}{wPA0_svg-tex.pdf_tex}} \\
    \end{tabular}
    \\
    \begin{tabular}{c}
        $\mathrm{TB}_1$ \resizebox{0.22\columnwidth}{!}{\import{svg-inkscape}{wTB0_svg-tex.pdf_tex}} \\
        $\mathrm{TC}_1$ \resizebox{0.15\columnwidth}{!}{\import{svg-inkscape}{wTC0_svg-tex.pdf_tex}} \\
    \end{tabular}
    &
    \begin{tabular}{c}
        $\mathrm{PB}_2$ \resizebox{0.29\columnwidth}{!}{\import{svg-inkscape}{wPB1_svg-tex.pdf_tex}} \\
        $\mathrm{TA}_2$ \resizebox{0.25\columnwidth}{!}{\import{svg-inkscape}{wTA1_svg-tex.pdf_tex}} \\
    \end{tabular}
\end{tabular}
\caption{The wriggly recursive rules}
\end{figure}

\begin{figure}[htb]
    \centering
    \includegraphics[width=1\linewidth]{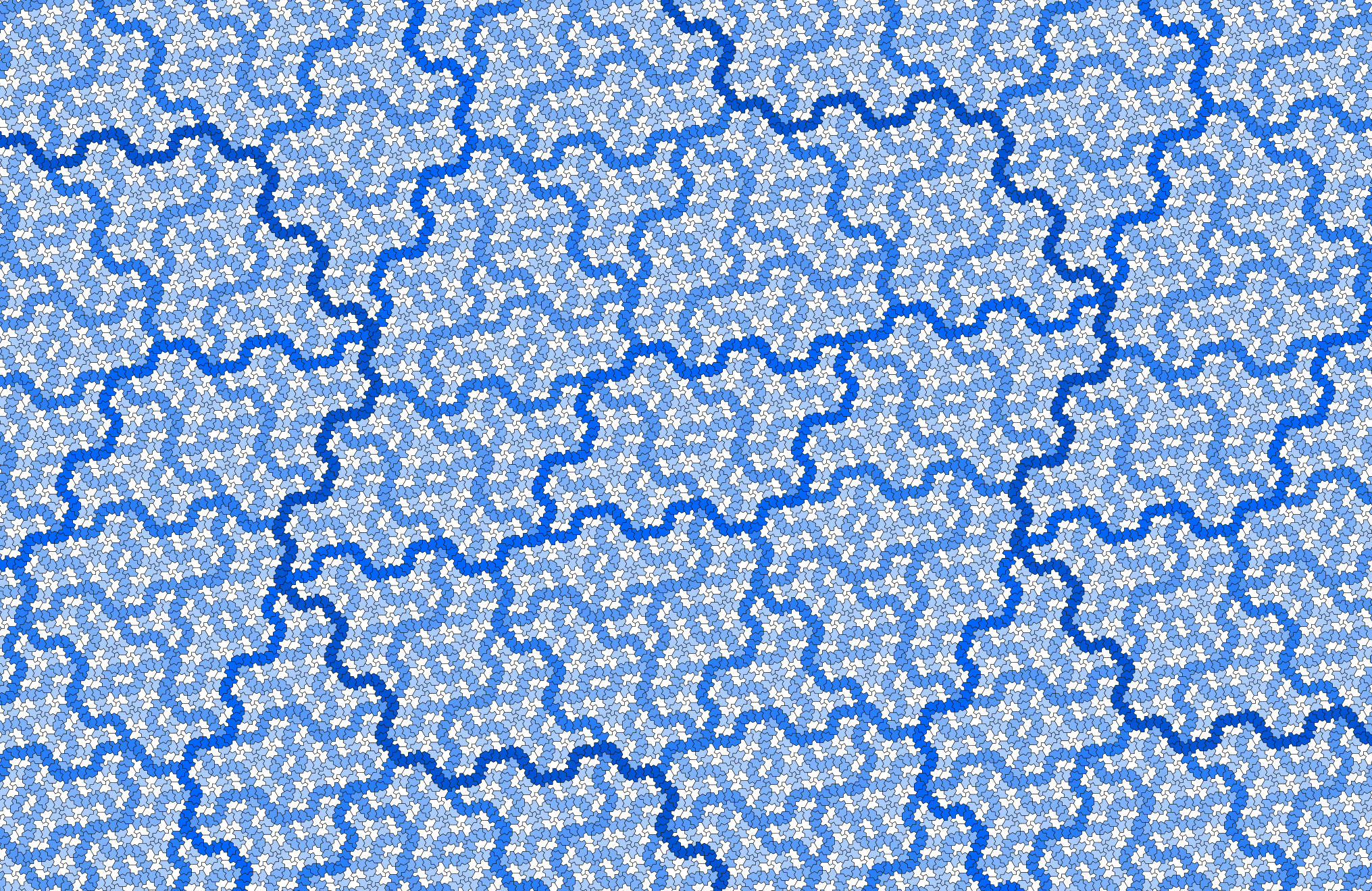}
    \caption{Articulated worms}
    \label{fig:patch-a}
\end{figure}
\begin{figure}[htb]
    \centering
    \includegraphics[width=1\linewidth]{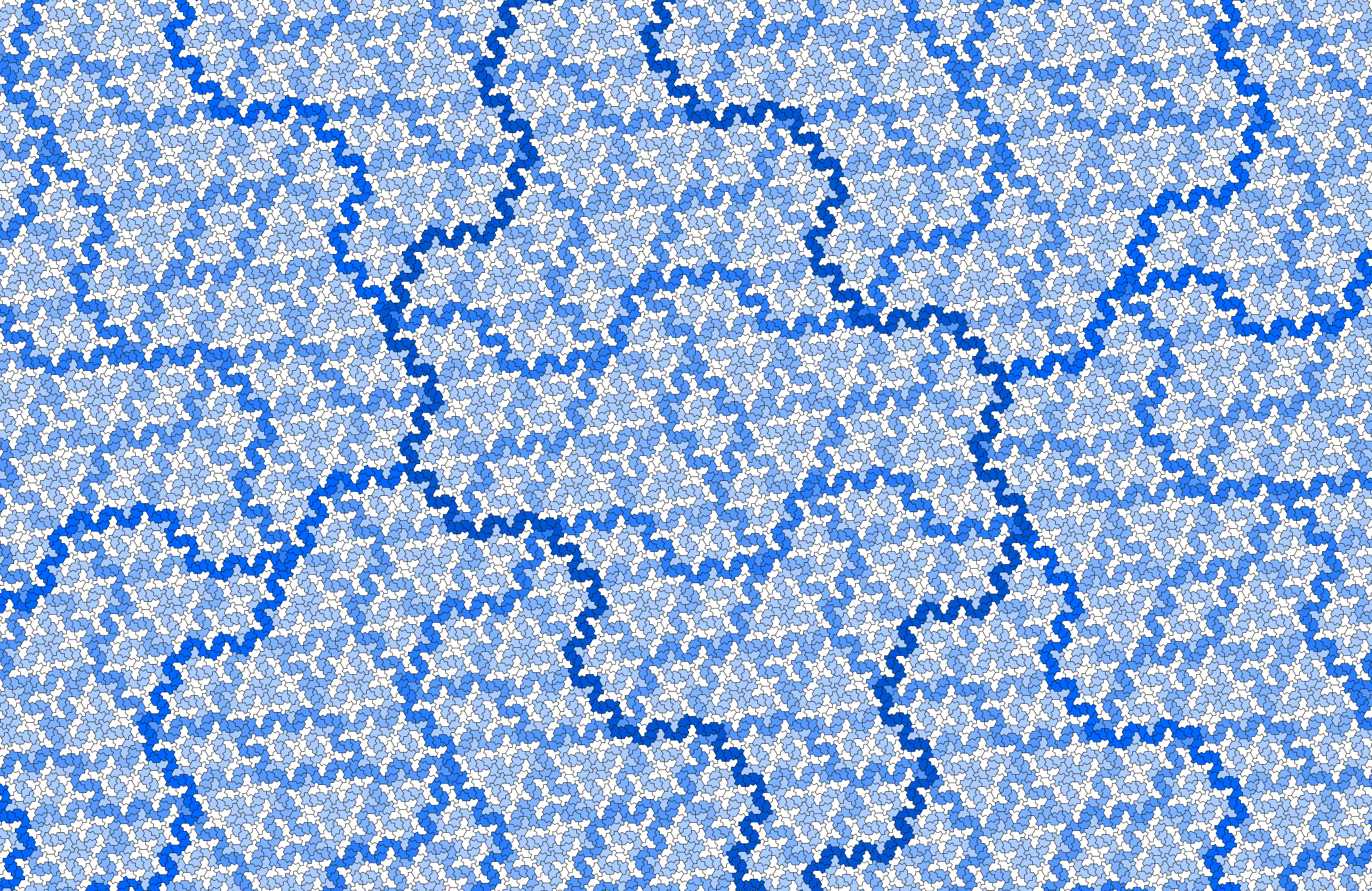}
    \caption{Wriggly worms}
    \label{fig:patch-w}
\end{figure}

\clearpage
\subsection{Sturmian words}\label{section:sturmian}
 
Let $[0; 1 + d_1, d_2, d_3, ... ]$ be the continued fraction representation of a real number $a$ with $0 < a < 1$. The sequence of words defined recursively as

\begin{align*}
    s_0 & = 1\\
    s_1 & = 0\\
    s_{k+1} & = s_{k}^{d_k} s_{k-1}
\end{align*}

is called the standard sequence with slope $a$ and converges to an infinite word called the characteristic word for $a$.

We have already seen a standard sequence related to the Turtle tiling: the Fibonacci words from Section~\ref{section:fibinacci} are the standard word corresponding to the value $a = \frac{1}{10}(5-\sqrt{5}) = [0; 3, \bar{1}]$. Values represented by periodic continued fractions also satisfy a quadratic equation. In this case, $a$ satisfies $x(1-x)=\frac{1}{5}$ and the second root $a^\prime = \frac{1}{10}(5+\sqrt{5})$ is also in the range $(0,1)$ and has the continued fraction representation $[0;1,2,\bar{1}]$.

The standard sequences for the two roots are as follows:

\bigskip
\begin{tabular}{c|c|c}
        & $[0;3,\bar{1}]$ & $[0;1,2,\bar{1}]$ \\
    $s_0$ & 1 & 1 \\
    $s_1$ & 0 & 0 \\
    $s_2$ & 001 & 1 \\
    $s_3$ & 0010 & 110 \\
    $s_4$ & 0010001 & 1101 \\
    $s_5$ & ... & 1101110 \\
\end{tabular}
\bigskip

That is, the standard sequences of the two conjugate roots are the binary complement of each other. This makes sense since the standard sequences converge to the cutting sequence for a line with slope $a$ (see \cite{combinatoricsonwords}): a line of slope $a$ intersects vertical and horizontal grid lines. Each intersection adds a `0' or `1' to the sequence depending on whether the grid line was horizontal or vertical. The binary complement is the operation which sends a line of slope $a$ to a line of slope $1-a$. In our case, the two roots of $x(1-x)=\frac{1}{5}$ have $a^\prime = 1 - a$.

We've seen that the Spectre tiles also have recursive rules defining strips of tiles that build into ever increasing patches to tile the plane. Let's calculate the standard words for these recursive strips of tiles. For this, make a choice of labelling an odd tile as `1' and an even tile as `0'. 

For articulated worms, the sequence has $S_0 = \varnothing$ and $I_0 = 0$,
 $S_1 = 001000100$ and $I_1 = 010010$. Additionally, writing  $E = 10$ (the even number 2 in binary), and $O = 01$, we have the recursions.

\begin{align*}
   S_{k+1} &= (S_kI_iS_kI_iS_k) E (S_kI_iS_k) O (S_kI_iS_kI_iS_k)  \\
   I_{k+1} &= O (S_kI_iS_kI_iS_k) E  
\end{align*}

Since the word lengths increase rapidly, we will use these symbols $S$, $I$, $O$ and $E$ to make the words more manageable. Also it will be helpful to note the string equality $OS_kI_k = I_kS_kE$: this is trivially true for $k=0$, for larger $k$ it follows by induction since

\begin{align*}
  OS_kI_k &:= O\ (SISISESISOSISIS)\ OSISISE \\
          & = OSISISE\ SIS(OSI)SISOSISIS\ E \\
          &= OSISISE\ SIS(ISE)SISOSISIS\ E \\ 
          &=: I_kS_kE
\end{align*}

\begin{table}[htb]
    \centering
    \begin{tabular}{c|c}
         & \\
        $s_0$ & 1  \\
        $s_1$ & 0 \\
        $s_2 = s_1^{3-1}s_0$ & 00 1 \\
        & \\
        $s_3 = s_2s_1$ & (001)0 = $S_0I_0S_0I_0S_0\ \ E$ \\
        $s_4 = s_3^2s_2$ & (0010)(0010)(001) = $S_1 O$ \\
        $s_5 = s_4s_3$ & (00100010001)(0010) = $S_1I_1$  \\
        $s_6 = s_5s_4$ & $(S_1I_1)(S_1O)$  \\
        & \\
        $s_7 = s_6s_5$ & $(S_1I_1S_1O)(S_1I_1)$ = $S_1I_1S_1I_1S_1\ \ E$ \\
        $s_8 = s_7^2s_6$ & $(S_1I_1S_1I_1S_1E)(S_1I_1S_1I_1S_1E)(S_1I_1S_1O)$ = $S_2O$ \\
        $s_9 = s_8s_7$ & $(S_2O)(S_1I_1S_1I_1S_1E)$ = $S_2I_2$  \\
        $s_{10}=s_9s_8$ & $(S_2I_2)(S_2O)$  \\
        
    \end{tabular}
    \caption{$[0; 3, \overline{1, 2, 1, 1}]$}
    \label{table:articulatedsturmian}
\end{table}

\begin{table}[htb]
    \centering
    \begin{tabular}{c|c}
         & \\
        $s_0$ & 1  \\
        $s_1$ & 0 \\
        $s_2 = s_1^{4-1}s_0$ & 000 1 \\
        $s_3 = s_2s_1$ & (0001)(0) = 00\ 010 =: $S_0I_0$\\
        $s_4 = s_3s_2$ & (00010)(0001) = 00 010 00 01 =: $S_0I_0S_0O$ \\
        & \\
        $s_5 = s_4s_3$ & $(S_0I_0S_0O)(S_0I_0)$ = $S_0I_0S_0I_0S_0\ \ E$  \\
        $s_6 = s_5^2s_4$ & $S_1O$ \\
        $s_7 = s_6s_5$ & $S_1I_1$ \\
        $s_8 = s_7s_6$ & $(S_1I_1)(S_1O)$ \\
        
    \end{tabular}
    \caption{$[0; 4, \overline{1, 1, 1, 2}]$}
    \label{table:wrigglysturmian}
\end{table}

Table~\ref{table:articulatedsturmian} makes use of this identity to show that the recursion for the Spectre's articulated Conway worms is given by the standard word with slope $[0; 3, \overline{1, 2, 1, 1}] = \frac{2}{19} (5 - \sqrt{6})$. Similarly, with it's different initial values, but the same recursion relation, the odd tiles along the wriggly Conway worm correspond to $[0; 4, \overline{1, 1, 1, 2}] = \frac{1}{19} (9 - 2\sqrt{6})$.

These values are roots of $19 x^2 - 20 x + 4 = 0$ and $19 x^2 - 18 x + 3 = 0$ which are related by the identity $19(1-x)^2 - 20 (1-x) + 4 = 19 x^2 - 18 x + 3$ so that the binary complement of each sequence corresponds to conjugate root.

Hence it makes sense to refer to the articulated and wriggly Spectre constructions as being conjugate.
\FloatBarrier

\section{Acknowledgements}

I would like to express my sincere thanks to Erhard Künzel and Yoshiaki Araki for their advice and suggestions in preparing this work. I benefited from their friendly and collaborative spirit and I hope that this final result does justice to their contributions.
\FloatBarrier

\printbibliography

\begin{figure}[h]
    \centering
    \includesvg[width=0.05\columnwidth]{images/spectre/squaremystic}
\end{figure}

\end{document}